\documentclass[12pt,reqno,draft]{amsart}
\usepackage{amsthm}
\usepackage{amscd}
\usepackage{amsfonts}
\usepackage{amssymb}
\usepackage{amsgen}
\usepackage{amsmath}
\usepackage{amsopn}
\usepackage{verbatim}
\usepackage{xypic}
\usepackage{xspace}
\usepackage{multicol}
\usepackage{url}
\usepackage{upref}

\theoremstyle{plain}
\newtheorem{thm}{Theorem}[section]
\newtheorem{lem}[thm]{Lemma}

\theoremstyle{definition}

\theoremstyle{remark}
\newtheorem{rem}[thm]{Remark}

%\numberwithin{equation}{section}

 \font\cyr=wncyr10
 \newcommand{\nc}{\newcommand}

\DeclareMathOperator{\sgn}{{sgn}}
\DeclareMathOperator{\sha}{{\mbox{\cyr x}}}

 \nc{\genf}{\genfrac{(}{)}{0pt}{}}
 \def\sqfr#1#2{{\frac{\genf{#1}{#2}}{\genf{#1+#2}{#2}}}}
 \nc{\calH}{{\mathcal H}}
 \nc{\calh}{{\hbar}}

\nc{\per}[1]{\underset{#1}{\boldsymbol \pi}\,}
 \nc{\hyf}{\text{-}}
 \nc{\wdt}{\widetilde}
 \nc{\bt}{{\bf 2}}
 \nc{\id}{{\rm id}}
 \nc{\ot}{\otimes}
 \nc{\Z}{{\mathbb Z}}
 \nc{\R}{{\mathbb R}}
 \nc{\N}{{\mathbb N}}
 \nc{\Q}{{\mathbb Q}}
 \nc{\C}{{\mathbb C}}
 \nc{\oI}{{\overline{I}}}
 \nc{\oJ}{{\overline{J}}}
 \nc{\ol}{\overline}
 \nc{\oll}[1]{\underline{#1}}
 \nc{\oK}{{\overline{K}}}
 \nc{\bI}{{\bar{I}}}
 \nc{\bJ}{{\bar{J}}}
 \nc{\bK}{{\bar{K}}}

 \nc{\ds}{{\scriptstyle\diamondsuit}}
 \nc{\ga}{\alpha}
 \nc{\gk}{\kappa}
 \nc{\gs}{\sigma}
 \nc{\gl}{\lambda}

 \nc{\bfone}{{\bf 1}}
 \nc{\bfA}{{\bf A}}
 \nc{\bfe}{{\bf e}}
 \nc{\bfp}{{\bf p}}
 \nc{\bfq}{{\bf q}}
 \nc{\bfs}{{\bf s}}
 \nc{\tbfs}{{\tilde{\bf s}}}
 \nc{\tbft}{{\tilde{\bf t}}}
 \nc{\tbfu}{{\tilde{\bf u}}}
 \nc{\bft}{{\bf t}}
 \nc{\bfu}{{\bf u}}
 \nc{\bfv}{{\bf v}}
 \nc{\bfw}{{\bf w}}
 \nc{\bfx}{{\bf x}}
 \nc{\bfy}{{\bf y}}
 \nc{\frakS}{{\mathfrak S}}
\textwidth=15.6cm
\oddsidemargin=0.5cm
\evensidemargin=0.5cm
\textheight=22.7cm

\begin{document}

\title[Multiple Harmonic Sums and Multiple Zeta Star Values]{Identity Families of Multiple Harmonic Sums and \\
Multiple Zeta Star Values}

\author{Jianqiang Zhao}
\date{November 28, 2013}
\subjclass[2010]{11M32, 11B65, 11B83.}
\keywords{multiple harmonic sums, multiple zeta (star) values, binomial sums.}

\maketitle

\begin{center}
\vskip-.7cm
Department of Mathematics, Eckerd College, St. Petersburg, FL 33711\\
Email: zhaoj@eckerd.edu
\end{center}

\begin{quote}
{\small\noindent
\textbf{Abstract.} In this paper we present many new families of identities for multiple harmonic sums using binomial coefficients. Some of these generalize a few recent results of Hessami Pilehrood, Hessami Pilehrood and Tauraso \cite{HessamiPilehrood2Ta2013}. As applications we prove several conjectures involving multiple zeta star values (MZSV): the Two-one formula conjectured by Ohno and Zudilin, and a few conjectures of Imatomi et al. involving $\bt$-$3$-$\bt$-$1$ type MZSV, where the boldfaced $\bt$ means some finite string of 2's.}
\end{quote}

\section{Introduction}\label{intro}
For over two hundred years, Euler's pioneering work on double zeta values \cite{Euler1775} was largely neglected, until in the early 1990s when Zagier showed the importance of the more general multiple zeta values in his famous paper \cite{Zagier1994}. Since then these values have come up in many areas of current research in mathematics and physics, such as knot theory, motivic theory, mirror symmetry and Feynman integrals, to name just a few. One of the central problems is to determine various $\Q$-linear relations among these values, many of which have been discovered numerically first and then proved rigorously later. One such family that still defies a proof until now is the celebrated Two-one formula discovered by Ohno and Zudilin \cite{OhnoZu2008}.

The main goal of this paper is to give a comprehensive study of multiple zeta star values of a few special types using the corresponding identities established first for multiple harmonic sums. As one of the applications, we give a concise proof of the Two-one formula.

We now recall some definitions. Let $\N$ be the set of natural numbers, $\Z$ the set of integers, $\Z^*$ the set of nonzero integers, and $\N_0=\N\cup\{0\}$. For any $\ell\in \N$
and $\bfs=(s_1, s_2, \ldots, s_\ell)\in (\Z^*)^\ell$ we define the \emph{(alternating) multiple harmonic sum} (MHS for short)
{\allowdisplaybreaks
\begin{align}\label{equ:Hdefn}
H_n(s_1, s_2, \ldots, s_\ell)=&\sum_{n\ge k_1>k_2>\ldots>k_\ell\ge 1}\prod_{i=1}^\ell
\frac{\sgn (s_i)^{k_i}}{k_i^{|s_i|}},\\
 \label{equ:Sdefn}
H^\star_n(s_1, s_2, \ldots, s_\ell)=&\sum_{n\ge k_1\ge k_2\ge\ldots\ge k_\ell\ge 1}\prod_{i=1}^\ell
\frac{\sgn (s_i)^{k_i}}{k_i^{|s_i|}}.
\end{align}}
This star-version has been denoted by $S_n$ in the literature but it seems to be more appropriate
to use $H^\star$ in this paper due to its close connection with multiple zeta star values
to be defined momentarily.
Conventionally, we call $l({\bf s}):=\ell$ the depth and
$|\bfs|:=\sum_{i=1}^\ell |s_i|$ the weight.
For convenience we set  $H_n(\bfs)=0$ if $n<l(\bfs)$,
$H_n(\emptyset)=H^\star_n(\emptyset)=1$ for all $n\ge 0$, and $\{s_1, s_2, \ldots, s_\ell\}^r$
the set formed by repeating the composition $(s_1, s_2, \ldots, s_\ell)$ exactly $r$ times.

When $\bfs=(s_1, s_2, \ldots, s_\ell)\in  (\Z^*)^\ell$ with $(s_1,\sgn(s_1))\ne (1,1)$
we set, respectively, the \emph{(alternating) Euler sum} and
the \emph{(alternating) star Euler sum} by
\begin{equation}\label{equ:EulerSumDefn}
\zeta(\bfs)=\lim_{n\to\infty} H_n(\bfs),\qquad
\zeta^\star(\bfs)=\lim_{n\to\infty} H^\star_n(\bfs).
\end{equation}
When $\bfs\in\N^\ell$ they are called the \emph{multiple zeta value} (MZV)
and the \emph{multiple zeta star value} (MZSV), respectively.

Notice that we have abused the notation in the definitions \eqref{equ:Hdefn}, \eqref{equ:Sdefn} and \eqref{equ:EulerSumDefn} since all these sums can be evaluated at negative integers,
because, for example, a multiple zeta function can be analytically continued to the whole complex space and it is possible to define its values at negative integers \cite{AET2001,Zhao1999}. Hence throughout the
paper we will write $\bar n$ whenever $-n$ appears as an argument. For instance,
we really should write $\zeta(\ol{2})$ instead of $\zeta(-2)$ which usually means
the Riemann zeta function at $-2$.

We now state the Two-one formula conjectured by Ohno and Zudilin \cite{OhnoZu2008}.
\begin{thm}\label{thm:MZSV21}
Let $r\in \N$ and $\bfs=(\{2\}^{a_1},1,\dots,\{2\}^{a_r},1)$
where $a_1\in\N$ and $a_j\in\N_0$ for all $j\ge 2$. Then we have
 \begin{equation*}
\zeta^\star(\bfs) =\sum_{\bfp} 2^{\ell(\bfp)}\zeta(\bfp),
\end{equation*}
where $\bfp$ runs through all indices of the form $(2a_1+1)\circ \cdots\circ (2a_r+1)$
with ``$\circ$'' being either the symbol ``,'' or the symbol ``$+$''.
\end{thm}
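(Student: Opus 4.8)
The plan is to deduce Theorem~\ref{thm:MZSV21} from a purely finite identity among multiple harmonic sums and then pass to the limit $n\to\infty$. Writing $b_i=2a_i+1$ for the weight of the $i$-th block (so $b_1\ge 3$ and $b_j\ge 1$ for $j\ge2$), I would first look for an identity of the shape
\begin{equation*}
H^\star_n(\{2\}^{a_1},1,\dots,\{2\}^{a_r},1)=\sum_{\bfp}2^{\ell(\bfp)}H_n(\bfp)+R_n,
\end{equation*}
where $\bfp$ ranges over the comma/plus refinements of $(b_1,\dots,b_r)$ and $R_n$ is an explicit finite binomial sum. Because the leading entry of $\bfs$ is $2$ and every $\bfp$ has leading entry at least $b_1\ge3$, both $H^\star_n(\bfs)$ and each $H_n(\bfp)$ converge as $n\to\infty$; so once such a finite identity is in hand it suffices to show $R_n\to0$, which for the expected binomial remainder (carrying a factor such as $1/\binom{2n}{n}$) is immediate.

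The heart of the argument is to realize both sides of the finite identity through a common family of binomial sums, in the spirit of the Hessami Pilehrood--Tauraso identities that the paper generalizes. Concretely, I would introduce a master sum built from central binomial coefficients, of the form $\sum_{n\ge k_1>\cdots>k_r\ge1}\prod_j c(k_j)$ with $c(k)$ encoding the block weight $b_j=2a_j+1$ together with an alternating factor $(-1)^{k-1}/\binom{2k}{k}$, and prove two things separately: that $H^\star_n(\bfs)$ equals this master sum, and that $\sum_{\bfp}2^{\ell(\bfp)}H_n(\bfp)$ equals the same master sum. The factor $2^{\ell(\bfp)}$ is exactly what I expect to emerge from the central binomial coefficients, each block of the star index telescoping to contribute a factor $2$ per surviving part, consistent with the base case $r=1$, $a_1=1$, where $\zeta^\star(2,1)=\zeta(2,1)+\zeta(3)=2\zeta(3)=2\zeta(2a_1+1)$.

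For the star side I would argue by induction on the number of blocks $r$, peeling off the innermost block $\{2\}^{a_r},1$ whose smallest summation index is controlled by a partial-fraction/telescoping identity in that variable, or equivalently by induction on $n$, checking the increment $H^\star_n-H^\star_{n-1}$ against that of the master sum. For the weighted non-star side I would expand each $H_n(\bfp)$ and use the standard conversion between non-strict and strict inequalities: the comma/plus structure of $\bfp$ is precisely the bookkeeping of which adjacent summation indices are forced equal, so reorganizing this expansion should collapse the $2^{\ell(\bfp)}$-weighted sum onto the master sum.

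The main obstacle will be establishing the master binomial identity itself, i.e.\ pinning down the exact form of $c(k)$ and verifying that the delicate combination of central binomial coefficients, alternating signs, and the odd weights $2a_j+1$ telescopes correctly; this is where the $\{2\}^{a}$-blocks and the trailing $1$'s must be shown to conspire to produce both the single power $b_j=2a_j+1$ on the right and the weight $2^{\ell(\bfp)}$. I expect the cleanest route is to first settle the single-block case $r=1$, namely $\zeta^\star(\{2\}^{a_1},1)=2\zeta(2a_1+1)$, as a self-contained finite binomial identity, and then bootstrap to general $r$ by the block induction, with the comma/plus sum over $\bfp$ appearing automatically once the induction hypothesis is substituted.
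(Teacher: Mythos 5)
Your overall architecture --- prove a finite identity for $H^\star_n(\bfs)$ whose right-hand side converges term by term to $\sum_{\bfp}2^{\ell(\bfp)}\zeta(\bfp)$, then let $n\to\infty$ --- is exactly the paper's strategy (Theorem~\ref{thm:MHS21} followed by Lemma~\ref{lem:limitBound}). But the proposal stops short of the actual content: you explicitly defer ``establishing the master binomial identity itself,'' and that identity \emph{is} the proof. Moreover, the shape you guess for it is wrong. You propose a master sum $\sum_{n\ge k_1>\cdots>k_r\ge1}\prod_j c(k_j)$ with $c(k)$ a function of $k$ alone carrying a factor $(-1)^{k-1}/\binom{2k}{k}$. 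The one-block case you intend to bootstrap from, Theorem~\ref{thm:HP2T-MHSid21+212}(A),
\begin{equation*}
H^\star_n(\{2\}^a,1)=2\sum_{k=1}^n\frac{\binom{n}{k}}{k^{2a+1}\binom{n+k}{k}},
\end{equation*}
already shows that the correct mollifier is $\binom{n}{k}\big/\binom{n+k}{k}$, which depends on $n$ as well as $k$ and is attached only to the \emph{outermost} summation index; a product $\prod_j c(k_j)$ over all indices cannot specialize to this. (Quantitatively, $\sum_{k\ge1}(-1)^{k-1}/(k^3\binom{2k}{k})=\tfrac{2}{5}\zeta(3)$, so central-binomial factors of your proposed type produce constants like $5/2$, not the clean powers of $2$.) The finite identity that actually works (Theorem~\ref{thm:MHS21}) expresses $H^\star_n(\bfs)$ as a $2^{\ell(\bfp)}$-weighted sum of the mollified sums $\calH_n(\bfp)$ of \eqref{equ:calHn} over $\bfp\in\Pi(2a_1+1,\ldots,2a_r+1)$, and the factor $2^{\ell(\bfp)}$ arises not from telescoping central binomials but from the coefficient $m^{l(\bfx)}$ with $m=2$ in Lemma~\ref{lem:PTlemma2.2}.

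Two further points. First, the induction that closes is on $r+n$, peeling off the \emph{outermost} block $\{2\}^{a_1}$ by splitting on whether $k_1=n$, summing the resulting geometric series via \eqref{equ:geomSum}, and killing the leftover term with \eqref{equ:PTlemma2.2b}; your alternatives (peeling the innermost block, or reorganizing the $2^{\ell(\bfp)}$-weighted sum of $H_n(\bfp)$ by strict/non-strict bookkeeping) are not shown to terminate and are not what makes the error terms cancel. Second, the passage to the limit is not ``immediate'': one must prove
\begin{equation*}
\lim_{n\to\infty}\sum_{k=1}^n\frac{|H_{k-1}(\bfs)|}{k^e}\left(1-\frac{\binom{n}{k}}{\binom{n+k}{k}}\right)=0 ,
\end{equation*}
which is the content of Lemma~\ref{lem:limitBound} and genuinely uses $e>1$, i.e.\ the hypothesis $a_1\ge1$ guaranteeing that the leading entry of every $\bfp$ is at least $3$.
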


Until recently, not many nontrivial families of identities relating MZVs or MZSVs with truly alternating Euler sums have been proved. One such result is proved by Zlobin \cite{Zlobin2005}
\begin{equation}\label{equ:Zlobin}
 \zeta^\star(\{2\}^{n}) = -2\zeta(\ol{2n}) \quad \text{for all }n\ge 1.
\end{equation}
Another is proved in \cite{Zhao2010a}: $\zeta(\{3\}^{n}) = 8^n\zeta(\{\ol{2},1\}^n)$. Recently, two more appear as (27) and (28) of \cite{HessamiPilehrood2Ta2013} one of which yields a new proof of an identity of \cite{Zagier2012}. Notice that \cite[(22)]{HessamiPilehrood2Ta2013} implies \eqref{equ:Zlobin} easily (see Lemma~\ref{lem:limitBound}). To provide more such families in this paper we need some book-keeping first. A boldface of a single digit number means the number is repeated a few times. We underline a string pattern to mean the whole pattern is repeated. Thus the Two-one formula should be written as $\oll{\bt\hyf1}$ formula and in each repetition the $\bt$'s may have different lengths.

Besides the  $\oll{\bt\hyf1}$ formula in Theorem~\ref{thm:MZSV21} we show many analogous formulas in this paper. For ease of
reference we list them as follows:
\begin{enumerate}
  \item \label{case:21}
  $\oll{\bt\hyf1}$: \S\ref{sec:MHS21&212} for MHS, \S\ref{sec:MZSV21&212proof} for MZSV;
  \item  \label{case:212}
$\oll{\bt\hyf1}\hyf\bt$ (nontrivial substring $\bt$ at the end): \S\ref{sec:MHS21&212} for MHS, \S\ref{sec:MZSV21&212proof} for MZSV;
  \item  \label{case:2c2}
$\oll{\bt\hyf c}\hyf\bt$ ($c\ge 3$ and $\bt$ at the end may be trivial):
    to appear in a joint work with my student Erin Linebarger \cite{LinebargerZh2013};
  \item  \label{case:2c21}
$\oll{\bt\hyf c\hyf \bt\hyf1}$: \S\ref{sec:MHSMZSV2c21&212c21} for MHS and MZSV;
  \item  \label{case:2c212}
$\oll{\bt\hyf c\hyf \bt\hyf1}\hyf\bt$ (nontrivial $\bt$ at the end): \S\ref{sec:MHSMZSV2c212&212c212} for MHS and MZSV;
  \item  \label{case:212c21}
$\bt\hyf1\hyf\oll{\bt\hyf c\hyf\bt\hyf1}$: \S\ref{sec:MHSMZSV2c21&212c21} for MHS and MZSV;
  \item  \label{case:212c212}
$\bt\hyf1\hyf\oll{\bt\hyf c\hyf\bt\hyf1}\hyf\bt$ (nontrivial $\bt$ at the end): \S\ref{sec:MHSMZSV2c212&212c212} for MHS and MZSV;
  \item \label{case:212c2}
$\oll{\bt\hyf1\hyf\bt\hyf c}\hyf\bt$ ($\bt$ at the end may be trivial): \S\ref{sec:MHSMZSV212c2&2c212c2} for MHS and MZSV;
  \item \label{case:2c212c2}
$\bt\hyf c\hyf \oll{\bt\hyf1\hyf\bt\hyf c}\hyf\bt$ ($\bt$ at the end may be trivial): \S\ref{sec:MHSMZSV212c2&2c212c2} for MHS and MZSV;
  \item \label{case:1c1}
  $\oll{\bfone\hyf c}\hyf\bfone$ ($c\ge 1$ and $\bfone$ at the end may be trivial):  \S\ref{sec:MHS1c&1c1} for MHS.
\end{enumerate}
For example, the following is the $\oll{\bt\hyf1}\hyf\bt$ \emph{formula}.
\begin{thm}\label{thm:MZSV212}
Let $r\in \N$ and $\bfs=(\{2\}^{a_1},1,\dots,\{2\}^{a_r},1,\{2\}^{a_{r+1}})$
where $a_1,a_{r+1}\in\N$ and $a_j\in\N_0$ for all $2\le j\le r$. Then we have
 \begin{equation*}
\zeta^\star(\bfs) =-\sum_{\bfp} 2^{\ell(\bfp)}\zeta(\bfp),
\end{equation*}
where $\bfp$ runs through all indices of the form $(2a_1+1)\circ \cdots\circ (2a_r+1)\circ \ol{2a_{r+1}}$
with ``$\circ$'' being either the symbol ``,'' or the symbol O-plus ``$\oplus$''
defined by
\begin{equation}\label{eqn:Oplus}
 a\oplus b=\sgn(a)b+\sgn(b)a  \text{ for all }a,b\in \Z^*.
\end{equation}
\end{thm}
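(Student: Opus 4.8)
The plan is to reduce Theorem~\ref{thm:MZSV212} to a finite identity among multiple harmonic sums and then let $n\to\infty$, in complete parallel with the treatment of the Two-one formula in Theorem~\ref{thm:MZSV21}. Thus the argument splits into a combinatorial stage (a finite binomial identity for $H^\star_n$ and $H_n$, to be carried out in \S\ref{sec:MHS21&212}) and an analytic stage (passage to the limit, in \S\ref{sec:MZSV21&212proof}). The point of doing the finite version first is that the $2^{\ell(\bfp)}$ weights and the odd arguments $2a_j+1$ are not produced by the naive expansion of a star sum into ordinary sums; they come from a genuine binomial transformation of each block of $2$'s, and this is cleanest to track at finite level.

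First I would establish the finite version: for every $n$,
\[
H^\star_n(\{2\}^{a_1},1,\dots,\{2\}^{a_r},1,\{2\}^{a_{r+1}})
=-\sum_{\bfp}2^{\ell(\bfp)}H_n(\bfp)+R_n ,
\]
where $\bfp$ ranges over exactly the contracted indices described in the statement and $R_n$ is an explicit remainder built from central binomial coefficients $\binom{2n}{n}$ that vanishes as $n\to\infty$. The derivation should reuse the same binomial-coefficient machinery that handles the $\oll{\bt\hyf1}$ case, the only new ingredient being the trailing block $\{2\}^{a_{r+1}}$ that follows the last ``$1$''.

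The origin of the overall sign and of the operation $\oplus$ is the key structural point, and I would trace it to a Zlobin-type mechanism. Recall \eqref{equ:Zlobin}, namely $\zeta^\star(\{2\}^n)=-2\zeta(\ol{2n})$: when the innermost (trailing) string of $2$'s is converted into a single argument by the binomial identity, it produces an \emph{alternating} argument $\ol{2a_{r+1}}$ together with the factor $-2$ responsible for the global minus sign. The operation $\oplus$ of \eqref{eqn:Oplus} then records how the sign of this alternating last argument propagates when it is merged, through an ``$=$'' in the star expansion, with the preceding odd argument $2a_r+1$; this is exactly why ``$+$'' must be replaced by ``$\oplus$'' relative to Theorem~\ref{thm:MZSV21}.

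Finally, letting $n\to\infty$, the left-hand side tends to $\zeta^\star(\bfs)$ by definition, while on the right each $\zeta(\bfp)$ converges because its leading argument is $2a_1+1\ge 3$ (only enlarged by merging) and its last argument is $\ol{2a_{r+1}}$ with $a_{r+1}\ge 1$, hence alternating of weight $\ge 2$; the remainder satisfies $R_n\to 0$ by Lemma~\ref{lem:limitBound}. I expect the main obstacle to be the finite stage: proving the exact binomial identity and, inside it, correctly bookkeeping the signs as the alternating trailing argument is merged with its neighbours. Getting this sign propagation right is precisely what forces the definition \eqref{eqn:Oplus}, and it is the one place where this formula genuinely departs from the Two-one formula rather than following it verbatim.
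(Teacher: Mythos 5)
Your proposal follows the paper's route exactly: the finite identity you describe is precisely Theorem~\ref{thm:MHS212} (which the paper states with the mollified sums $\calH_n(\bfp)$ in place of $H_n(\bfp)$ plus an explicit remainder --- the same thing, since their difference is controlled by Lemma~\ref{lem:limitBound}), and the passage to the limit is justified, as you say, by Lemma~\ref{lem:limitBound} together with the observation that the leading entry of every $\bfp$ has absolute value at least $2a_1+1\ge 3$. Your reading of the overall sign and of the operation $\oplus$ as arising from the Zlobin-type contraction of the trailing block of $2$'s into the alternating argument $\ol{2a_{r+1}}$ is consistent with the $r=0$ case of that theorem, which recovers $\zeta^\star(\{2\}^{n})=-2\zeta(\ol{2n})$.
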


When $r=2$, we have checked numerically the following identities for all $0\le a,b,c\le 2$ and
$ac\ne 0$ with the help of EZ-face \cite{EZface}:
\begin{align*}
\zeta^\star(\{2\}^a,1,\{2\}^b,1,\{2\}^c)=&-2\zeta(\overline{2(a+b+c)+2})
-4\zeta(2a+2+2b,\overline{2c})\\
&-4\zeta(2a+1,\overline{2b+1+2c})-8\zeta(2a+1,2b+1,\overline{2c}).
\end{align*}

One of the main results contained in \cite[Theorem~2.3]{HessamiPilehrood2Ta2013} is the following theorem.
\begin{thm} \label{thm:HP2T-MHSid21+212} {\rm (\cite[Theorem~2.3]{HessamiPilehrood2Ta2013})}
Let $a\in \N_0$ and $b\in\N$. Then for any $n\in\N$
\begin{align}\label{equ:HP2T-MHSid21}
H^\star_n(\{2\}^a,1)&=2\sum_{k=1}^n\frac{\binom{n}{k}}{k^{2a+1}\binom{n+k}{k}}, \tag{A}\\
H^\star_n(\{2\}^a,1,\{2\}^b)&=-2\sum_{k=1}^n\frac{(-1)^{k}\binom{n}{k}}{k^{2a+1+2b}\binom{n+k}{k}}-
4\sum_{k=1}^n\frac{H_{k-1}(\ol{2b})\binom{n}{k}}{k^{2a+1}\binom{n+k}{k}}. \tag{B} \label{equ:HP2T-MHSid212}
\end{align}
\end{thm}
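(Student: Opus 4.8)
The plan is to pass from the discrete identities \eqref{equ:HP2T-MHSid21} and \eqref{equ:HP2T-MHSid212} to identities of rational functions, exploiting the fact that $H^\star_n(\{2\}^b)$ is the complete homogeneous symmetric polynomial $h_b$ in the variables $1/1^2,\dots,1/n^2$, so that for $|x|<1$
\begin{equation*}
\sum_{b\ge 0} H^\star_n(\{2\}^b)\,x^{2b}=\prod_{j=1}^n\frac{j^2}{j^2-x^2}.
\end{equation*}
First I would multiply each identity by $x^{2a}$ and sum over $a\ge 0$. For fixed $n$ this turns both sides into rational functions of $t=x^2$ that are holomorphic near $t=0$, vanish as $t\to\infty$, and have only simple poles, located at $t=k^2$ for $k=1,\dots,n$. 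Two such rational functions coincide as soon as their residues at every pole agree, and equating Taylor coefficients at $t=0$ then recovers the original identity for all $a$ at once.

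For \eqref{equ:HP2T-MHSid21}, splitting off the index carrying the exponent $1$ gives $H^\star_n(\{2\}^a,1)=\sum_{m=1}^n\frac1m\,h_a(1/m^2,\dots,1/n^2)$, so the generating function of the left-hand side is $\sum_{m=1}^n\frac1m\prod_{j=m}^n\frac{j^2}{j^2-t}$, while that of the right-hand side is $2\sum_{k=1}^n\frac{k\binom nk}{(k^2-t)\binom{n+k}k}$. A direct residue computation at $t=k^2$, evaluating $\prod_{j=m,\,j\ne k}^n \frac{j^2}{j^2-k^2}$ in closed form, reduces the entire identity to the single binomial evaluation
\begin{equation*}
\sum_{m=1}^k(-1)^{k-m}\binom{m+k-1}{k-1}\binom{k-1}{m-1}=1 .
\end{equation*}
This I would prove by the finite-difference principle: after the shift $m\mapsto m-1$ the sum becomes $(-1)^{k-1}\sum_{j=0}^{k-1}(-1)^j\binom{k-1}j\binom{j+k}{k-1}$, and since $\binom{j+k}{k-1}$ is a polynomial in $j$ of degree $k-1$ with leading coefficient $1/(k-1)!$, its $(k-1)$st finite difference equals $(-1)^{k-1}$, giving the value $1$.

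For \eqref{equ:HP2T-MHSid212} the same splitting, now isolating the $1$ between the two blocks of $2$'s, yields $H^\star_n(\{2\}^a,1,\{2\}^b)=\sum_{m=1}^n\frac{H^\star_m(\{2\}^b)}m\,h_a(1/m^2,\dots,1/n^2)$, so the only change on the left is the extra weight $H^\star_m(\{2\}^b)$. Carrying the residue computation through verbatim reduces \eqref{equ:HP2T-MHSid212} to the weighted identity
\begin{equation*}
\sum_{m=1}^k(-1)^{k-m}\binom{m+k-1}{k-1}\binom{k-1}{m-1}H^\star_m(\{2\}^b)=\frac{(-1)^{k-1}}{k^{2b}}-2H_{k-1}(\ol{2b}),
\end{equation*}
valid for $k\le n$ and $b\ge 1$. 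To establish this I would introduce a second generating variable $y$ for $b$ and repeat the method, using the product formula for $\sum_{b\ge1} H^\star_m(\{2\}^b)y^{2b}=\prod_{j=1}^m\frac{j^2}{j^2-y^2}-1$ together with the elementary expansions $\sum_{b\ge1} H_{k-1}(\ol{2b})y^{2b}=\sum_{i=1}^{k-1}(-1)^i\frac{y^2}{i^2-y^2}$ and $\sum_{b\ge1}k^{-2b}y^{2b}=\frac{y^2}{k^2-y^2}$, then compare residues in $s=y^2$; the $b=0$ specialization is exactly the unweighted identity above. Alternatively one can argue by induction on $b$ via the recursion $H^\star_m(\{2\}^b)=\sum_{j=1}^m j^{-2}H^\star_j(\{2\}^{b-1})$, with that same identity as base case.

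I expect the weighted identity for \eqref{equ:HP2T-MHSid212} to be the crux. In contrast to the clean unweighted sum, the factor $H^\star_m(\{2\}^b)$ makes the residue comparison genuinely two-parameter, and the delicate point is to see how the partial-fraction expansion reproduces both the isolated diagonal term $(-1)^{k-1}/k^{2b}$ and, with the correct sign and factor $2$, the truly alternating tail $H_{k-1}(\ol{2b})$. Tracking the sign $(-1)^{k-m}$ and the factorial factors through both generating variables is where the real work lies; once the symmetric-function reformulation is in place, the remainder of the argument is routine.
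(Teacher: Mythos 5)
Your generating-function strategy is sound and genuinely different from the route taken in the paper: there, (A) and (B) are obtained as the $r=1$ cases of Theorems~\ref{thm:MHS21} and \ref{thm:MHS212}, proved by induction on $n$ (peeling off the outermost summation index) with the telescoping identity \eqref{equ:PTlem2.1(7)} carrying all the combinatorial weight; you instead fix $n$, sum over the exponent $a$, and compare principal parts of rational functions of $t=x^2$. For part (A) your argument is complete and correct. I checked the residue computation: at $t=k^2$ the left side contributes $-\frac{2k\binom{n}{k}}{\binom{n+k}{k}}\sum_{m=1}^k(-1)^{k-m}\binom{m+k-1}{k-1}\binom{k-1}{m-1}$ (the factor $2k$ arising from the excluded index $j=k$ in $\prod_{j}(j+k)$), so the identity does reduce to your binomial sum equalling $1$, and your finite-difference proof of that evaluation is valid. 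Both generating functions vanish at infinity and have only simple poles, so matching residues does suffice.

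For part (B) there is a genuine gap. Your reduction to the weighted identity $\sum_{m=1}^k(-1)^{k-m}\binom{m+k-1}{k-1}\binom{k-1}{m-1}H^\star_m(\{2\}^b)=\frac{(-1)^{k-1}}{k^{2b}}-2H_{k-1}(\ol{2b})$ is correct, and the identity itself is true, but you do not prove it, and neither of your two proposed routes is as routine as you suggest. Introducing the second variable $s=y^2$ and matching residues at $s=i^2$ for $1\le i\le k-1$ leads to the family $\sum_{m=i}^k(-1)^{k-m}\binom{m+k-1}{k-1}\binom{k-1}{m-1}\prod_{j=1,\,j\ne i}^{m}\frac{j^2}{j^2-i^2}=2(-1)^{i+1}$, which is of essentially the same depth as the identity you started from (only the residue at $s=k^2$ collapses to a single closed-form product equal to $(-1)^{k-1}$). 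Likewise, the induction on $b$ via $H^\star_m(\{2\}^b)=\sum_{j\le m}j^{-2}H^\star_j(\{2\}^{b-1})$ requires a closed form for the partial sums $\sum_{m=j}^k(-1)^{k-m}\binom{m+k-1}{k-1}\binom{k-1}{m-1}$, which the finite-difference argument for the full sum does not supply. These obstacles are surmountable (the identities are finite hypergeometric sums amenable to creative telescoping), but as written the step you yourself flag as the crux is asserted rather than established; by contrast, the paper's induction on $n$ funnels both (A) and (B) through the single binomial input \eqref{equ:PTlem2.1(7)}.
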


We want to caution the reader that the convention of index ordering in \cite{HessamiPilehrood2Ta2013} is opposite to ours in the definitions \eqref{equ:Hdefn} and \eqref{equ:Sdefn} of MHS. This is the reason why $a$ and $b$ in Theorem~\ref{thm:HP2T-MHSid21+212}(B) is switched from the original statement in \cite[Theorem~2.3]{HessamiPilehrood2Ta2013}.

Theorems~\ref{thm:MHS21} and \ref{thm:MHS212} generalize Theorem~\ref{thm:HP2T-MHSid21+212}(A) and (B) respectively by allowing the arguments to contain an arbitrary number of 2-strings, which lead to the $\oll{\bt\hyf1}$ and $\oll{\bt\hyf1}\hyf\bt$ formulas for MHS. The proofs are straight-forward, however, the difficult part is the discovery of the theorems (using a lot of Maple experiments). By taking limits in these two theorems so that MHS become MZSV we can prove the $\oll{\bt\hyf1}$ formula and the  $\oll{\bt\hyf1}\hyf\bt$  formula for MZSV in Theorems~\ref{thm:MZSV21} and \ref{thm:MZSV212}, respectively.

In \S\ref{sec:ITTWConj} and \S\ref{sec:ITTWConj2} we provide new and concise proofs of a few conjectures first formulated by Imatomi et al.\ in \cite{ITTW2009} concerning MZSV of types $\oll{\bt\hyf 3\hyf\bt\hyf 1}$ and $\oll{\bt\hyf 3\hyf\bt\hyf 1}\hyf\bt$.

In the last section we propose a few possible future research directions, one of which will be carried out in
a sequel to this work in which we will study congruence properties of MHS as further applications of the results we have  obtained in this paper.

\medskip

\noindent{\bf Acknowledgement.} We would like to thank Roberto Tauraso for sending us their preprint \cite{HessamiPilehrood2Ta2013}. This work is partially supported by NSF grant DMS1162116 which enables me to work with my students more productively. In particular, this paper is inspired by a recent collaboration with one of my students, Erin Linebarger \cite{LinebargerZh2013}. We are able to generalize Theorems~2.1 of \cite{HessamiPilehrood2Ta2013} to arbitrary number of strings of 2's using similar ideas contained in this paper.

\section{MHS identities: $\oll{\bt\hyf1}$ formula and $\oll{\bt\hyf1}\hyf\bt$ formula}\label{sec:MHS21&212}
To state our main theorems we need some additional notations first. For $\bfs=(s_1,\dots,s_m)\in(\Z^*)^m$ we
define the mollified companion of $H_n(\bfs)$ by
\begin{equation}\label{equ:calHn}
  \calH_n(\bfs):= \sum_{n\ge k_1>\cdots >k_m\ge 1} \sqfr{n}{k_1}\,
 \prod_{j=1}^m  \frac{\sgn(s_j)^{k_j} }{k_j^{|s_j|}}
 =\sum_{k=1}^n \frac{\sgn(s_1)^{k}}{k^{|s_1|}} \sqfr{n}{k}\, H_{k-1}(s_2,\dots,s_m).
\end{equation}
We further define $\Pi(\bfs)$ to be the set of all indices of the form
$(s_1\circ\cdots\circ s_m)$ where ``$\circ$'' being either the symbol ``,'' or the symbol
O-plus``$\oplus$'' defined by \eqref{eqn:Oplus}.

The following result generalizes Theorem~\ref{thm:HP2T-MHSid21+212}(A).
\begin{thm}\label{thm:MHS21}
Let $r\in \N$ and $\bfs=(\{2\}^{a_1},1,\dots,\{2\}^{a_r},1)$
where $a_1\in\N$ and $a_j\in\N_0$ for all $j\ge 2$.  Then we have
 \begin{equation*}
H^\star_n(\bfs) =-\sum_{\bfp\in \Pi(2a_1+1,\ldots,2a_r+1)} 2^{\ell(\bfp)} \calH_n(\bfp).
\end{equation*}
\end{thm}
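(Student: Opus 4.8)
The plan is to prove Theorem~\ref{thm:MHS21} by induction on $r$, using Theorem~\ref{thm:HP2T-MHSid21+212}(A) as the base case and a recursion that peels off the rightmost $(\{2\}^{a_r},1)$ block. First I would rewrite the claimed right-hand side more explicitly: the set $\Pi(2a_1+1,\dots,2a_r+1)$ consists of indices obtained from $(2a_1+1,\dots,2a_r+1)$ by replacing each of the $r-1$ internal commas by either a comma or the $\oplus$ operation. Since all entries $2a_j+1$ are positive, $\oplus$ reduces to ordinary addition, so $\Pi$ is simply the set of all ways to merge consecutive runs of the tuple $(2a_1+1,\dots,2a_r+1)$ by summing. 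I would then set up the induction by expressing $H^\star_n(\{2\}^{a_1},1,\dots,\{2\}^{a_r},1)$ in terms of star-sums of shorter depth.

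The key device is a recursion for $H^\star_n$ obtained by stratifying the star-sum according to where the innermost index block contributes. Concretely, I would establish an identity of the shape
\begin{equation*}
H^\star_n(\bfs',\{2\}^{a_r},1)
= \sum_{k=1}^n \frac{1}{k}\,H^\star_{k}(\{2\}^{a_r}) \cdot (\text{something involving } H^\star \text{ of the prefix } \bfs'),
\end{equation*}
and convert the trailing $(\{2\}^{a_r},1)$ into the mollifier $\genf{n}{k}/\genf{n+k}{k}$ via the base-case formula \eqref{equ:HP2T-MHSid21}. The point of the definition \eqref{equ:calHn} of $\calH_n$ is precisely that the weight $\genf{n}{k_1}/\genf{n+k_1}{k_1}$ sits on the largest summation index, so that peeling blocks from the inside out keeps producing the same mollified structure. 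I expect that the inductive hypothesis applied to the prefix, combined with this recursion, will generate exactly the sum over $\Pi$, with the comma-versus-$\oplus$ dichotomy arising naturally from whether the new block's summation index is strictly less than, or equal to, the previous one (the boundary case $k_j = k_{j-1}$ producing the merged index $2a_{j-1}+2a_j+2$, i.e.\ the $\oplus$ term, and the factor $2^{\ell(\bfp)}$ tracking the number of surviving separate blocks).

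The main obstacle, and the step I would spend the most care on, is justifying that the ``equal index'' boundary terms reorganize exactly into the $\oplus$-merged indices with the correct sign and the correct power of $2$. The subtlety is combinatorial bookkeeping: each place where two adjacent blocks collapse must contribute a factor that converts two separate mollified sums into a single one, and I would need to verify this collapse is compatible with the weight $\sqfr{n}{k}$, which depends on the outermost index only. I anticipate checking this by a careful interchange-of-summation argument, writing $H^\star_n = H^\star_{n-1} + (\text{boundary at } k_1 = n)$ or, alternatively, by inducting on $n$ simultaneously and matching the telescoping contributions. Once the boundary analysis is pinned down, the sign $-$ and the exponent $\ell(\bfp)$ should fall out by comparing the number of comma-separated pieces before and after each merge, and the general $r$ case follows by the induction. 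A useful sanity check throughout is to specialize to $r=1$, where the statement must reduce to \eqref{equ:HP2T-MHSid21} after accounting for the single $\calH_n$ term and the sign.
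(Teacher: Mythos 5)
Your reading of the right-hand side is fine ($\oplus$ on positive entries is ordinary addition, so $\Pi(2a_1+1,\dots,2a_r+1)$ is the set of consecutive merges and $2^{\ell(\bfp)}$ counts the surviving parts), but the inductive mechanism you propose is aimed at the wrong end of the composition and leaves the decisive step unsupplied. With the paper's conventions the trailing block $(\{2\}^{a_r},1)$ carries the \emph{smallest} summation indices, while the weight $\sqfr{n}{k_1}$ in $\calH_n(\bfp)$ sits on the \emph{largest} index $k_1$ with $n$ fixed. So if you peel the rightmost block and substitute the base case \eqref{equ:HP2T-MHSid21} into the innermost partial sums, you produce mollifiers $\binom{k_d}{k}\big/\binom{k_d+k}{k}$ attached to an intermediate index $k_d$; these are not of the form $\calH_n(\bfp)$, and you give no mechanism (nor do I see one) for converting them. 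The paper instead inducts on $r+n$ and peels the \emph{leftmost} block: stratifying by how many of the leading indices equal $n$ gives
\begin{equation*}
H^\star_n(\bfs)=\sum_{l=0}^{a_1}\frac{1}{n^{2a_1-2l}}\,H^\star_{n-1}(\{2\}^l,1,\{2\}^{a_2},1,\dots,\{2\}^{a_r},1)
+\frac{1}{n^{2a_1+1}}\,H^\star_n(\{2\}^{a_2},1,\dots,\{2\}^{a_r},1),
\end{equation*}
after which the geometric sum \eqref{equ:geomSum} assembles the factor $\bigl(1-k^{2a_1+2}/n^{2a_1+2}\bigr)$ on the outermost index and the induction closes. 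The variant you mention only in passing (``$H^\star_n=H^\star_{n-1}+$ boundary at $k_1=n$, inducting on $n$ simultaneously'') is in fact the correct primary strategy, not a fallback.

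The step you defer --- showing that the boundary terms reorganize into the $\oplus$-merged indices with the right powers of $2$ --- is not bookkeeping but the entire content of the proof, and it requires one genuinely nontrivial input that your proposal never identifies: the binomial identity \eqref{equ:PTlem2.1(7)}, $2\sum_{k=l+1}^n k\binom{n}{k}/\binom{n+k}{k}=(n-l)\binom{n}{l}/\binom{n+l}{l}$, which yields the cancellation \eqref{equ:PTlemma2.2b} of Lemma~\ref{lem:PTlemma2.2}; it is exactly this identity that kills the correction terms in \eqref{equ:vashingTerm} coming from the indices $(1+a,\bfv)$ and $(1,a,\bfv)$. No amount of interchange-of-summation alone will produce this. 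Finally, one caution on your sanity check: you should not expect a minus sign to ``fall out'' of the merge count. For $r=1$ the displayed statement gives $-2\calH_n(2a_1+1)$, which contradicts \eqref{equ:HP2T-MHSid21}; the proof and Remark~\ref{rem:MHS21} show the intended right-hand side is $+\sum_{\bfp}2^{\ell(\bfp)}\calH_n(\bfp)$ (the minus sign in the statement is a typo), so chasing that sign through your boundary analysis would be chasing something that is not there.
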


\begin{rem}\label{rem:MHS21}
(a). When $r=1$ our Theorem \ref{thm:MHS21} becomes Theorem \ref{thm:HP2T-MHSid21+212}(A).

(b). When $r=2$ we get: for all $n\in \N$ and $a, b\in \N_0$
\begin{align*}
H^\star_n& (\{2\}^a,1,\{2\}^b,1)=2\sum_{k=1}^n\frac{\binom{n}{k}}{k^{2(a+b)+2}\binom{n+k}{k}}
+4\sum_{k=1}^n\frac{H_{k-1}(2b+1)\binom{n}{k}}{k^{2a+1}\binom{n+k}{k}}.
\end{align*}
When  $r=3$ we have: for all $n\in \N$ and $a, b, c\in \N_0$
\begin{align*}
H^\star_n& (\{2\}^a,1,\{2\}^b,1,\{2\}^c,1)=2\sum_{k=1}^n\frac{\binom{n}{k}}{k^{2(a+b+c)+3}\binom{n+k}{k}}
+4\sum_{k=1}^n\frac{H_{k-1}(2c+1)\binom{n}{k}}{k^{2a+2b+2}\binom{n+k}{k}}\\
& +4\sum_{k=1}^n\frac{H_{k-1}(2b+2c+2)\binom{n}{k}}{k^{2a+1}\binom{n+k}{k}}
+ 8\sum_{k=1}^n\frac{H_{k-1}(2b+1,2c+1)\binom{n}{k}}{k^{2a+1}\binom{n+k}{k}}.
\end{align*}
Using Maple we have verified both formulas numerically for $a,b,c\le 5$ and $n\le 100$.
\end{rem}

We now generalize Theorem~\ref{thm:HP2T-MHSid21+212}(B).
\begin{thm} \label{thm:MHS212}
Suppose  $r\in\N_0$ and $\bfs=(\{2\}^{a_1},1,\dots,\{2\}^{a_r},1,\{2\}^{a_{r+1}})$
where $a_j\in\N_0$ for all $j\le r$ and $a_{r+1}\in\N$. Then we have
\begin{equation*}
H^\star_n(\bfs) = - \sum_{\bfp\in \Pi(2a_1+1,\ldots,2a_r+1,\overline{2a_{r+1}})}  2^{\ell(\bfp)} \calH_n(\bfp).
\end{equation*}
\end{thm}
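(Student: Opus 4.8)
The plan is to induct on the depth $l(\bfs)$, stripping the leftmost letter of $\bfs$ via the elementary recursion
\[
 H^\star_n(s_1,s_2,\dots,s_m)=\sum_{k=1}^n\frac{\sgn(s_1)^k}{k^{|s_1|}}\,H^\star_k(s_2,\dots,s_m),
\]
obtained by fixing the top index $k_1=k$. As every letter is a $2$ or a $1$, each step removes a factor $1/k^2$ or $1/k$, and the recursion bottoms out at the single $2$-block $\bfs=(\{2\}^a)$, for which the assertion reads $H^\star_n(\{2\}^a)=-2\calH_n(\ol{2a})$. I would get this base case from the partial-fraction expansion
\[
 \prod_{k=1}^n\frac{k^2}{k^2-x^2}=-2\sum_{j=1}^n\frac{(-1)^j j^2}{j^2-x^2}\,\sqfr{n}{j},
\]
whose coefficient of $\frac{1}{j^2-x^2}$ is precisely $(-1)^{j-1}2j^2\sqfr{n}{j}$; reading off the coefficient of $x^{2a}$ then gives the stated identity (this is essentially \cite[(22)]{HessamiPilehrood2Ta2013}).

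The engine of the induction is a pair of transfer identities that carry the binomial weight from the inner level $k$ up to $n$. For a leading $2$, the telescoping relation $\sqfr{k}{j}-\sqfr{k-1}{j}=\frac{j^2}{k^2}\sqfr{k}{j}$ sums to $\sum_{k=j}^n\frac1{k^2}\sqfr{k}{j}=\frac1{j^2}\sqfr{n}{j}$, whence
\[
 \sum_{k=1}^n\frac1{k^2}\calH_k(q_1,q_2,\dots,q_m)=\calH_n\bigl(\sgn(q_1)(|q_1|+2),\,q_2,\dots,q_m\bigr).
\]
Peeling a leading $2$ lowers the first block index of the shorter word from $2a_1+1$ to $2a_1-1$, and adding $2$ to the leading magnitude on the right-hand side restores it, giving a length-preserving bijection $\Pi(2a_1-1,\dots)\to\Pi(2a_1+1,\dots)$. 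For a leading $1$ I would instead use
\[
 \sum_{k=j}^n\frac1k\sqfr{k}{j}=\frac1j\sqfr{n}{j}+2\sum_{i=j+1}^n\frac1i\sqfr{n}{i},
\]
which turns the peel into
\[
 \sum_{k=1}^n\frac1k\calH_k(q_1,\dots,q_m)=2\,\calH_n(1,q_1,\dots,q_m)+\calH_n\bigl(1\oplus q_1,\,q_2,\dots,q_m\bigr).
\]

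The two terms on the right are exactly the two fates of the newly created block index $1$ inside $\Pi(1,2a_2+1,\dots)$: either it stays separate, raising the depth by one (so the factor $2$ matches $2^{\ell+1}$), or it is absorbed into the next block through $\oplus$ (keeping the depth, hence $2^{\ell}$). Because $H^\star_k(\bfw)=-\sum_{\bfq}2^{\ell(\bfq)}\calH_k(\bfq)$ by the inductive hypothesis, substituting either transfer reproduces $-\sum_{\bfp}2^{\ell(\bfp)}\calH_n(\bfp)$ for the longer word with the global sign $-$ intact. Thus peeling a $2$ keeps the block count and lowers $a_1$, while peeling a $1$ (the case $a_1=0$) collapses one complete block and passes to a word whose first block index is $2a_2+1$; iterating reaches the single-block base, and the $\Pi$-combinatorics assemble correctly at each step.

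The step I expect to be the real obstacle is the leading-$1$ transfer: the leading-$2$ identity is a one-line telescoping, but $\sum_{k=j}^n\frac1k\sqfr{k}{j}$ does not telescope, and the identity above must be proved on its own — most likely by induction on $n$ using $\sqfr{n}{i}-\sqfr{n-1}{i}=\frac{i^2}{n^2}\sqfr{n}{i}$, or by a creative-telescoping certificate. A secondary nuisance is checking that the index maps induced by the transfers are genuine $\ell$-preserving bijections of the relevant $\Pi$-sets, in particular in the boundary case where the leading entry $q_1$ is negative — namely the fully merged index terminating in $\ol{2a_{r+1}}$ — where one must confirm that increasing $|q_1|$ by $2$ still corresponds to the shift $2a_1-1\mapsto 2a_1+1$ compatibly with $\oplus$.
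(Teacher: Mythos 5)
Your proof is correct, but it is organized quite differently from the paper's. The paper fixes the word $\bfs$ and inducts on $n+r$, splitting $H^\star_n(\bfs)$ according to how many of the leading summation indices equal $n$ (the sum $\sum_{l=0}^{a_1}n^{2l-2a_1}H^\star_{n-1}(\{2\}^l,1,\dots)$ plus a tail term), then collapsing the geometric sum via \eqref{equ:geomSum} and killing the residual terms with \eqref{equ:PTlemma2.2b} of Lemma~\ref{lem:PTlemma2.2}. You instead induct on the depth, peeling the leftmost argument completely by summing over its index, which replaces the paper's machinery by two clean transfer identities for $\sum_k k^{-2}\calH_k$ and $\sum_k k^{-1}\calH_k$. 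Both of your transfers are in fact true: the leading-$2$ one is the telescoping you state, and the leading-$1$ one, which you rightly flag as the crux, follows by your suggested induction on $n$ once you observe that the induction step reduces exactly to $2\sum_{i=j+1}^{n}i\,\sqfr{n}{i}=(n-j)\sqfr{n}{j}$, which is precisely \eqref{equ:PTlem2.1(7)} --- the same binomial identity underlying the paper's Lemma~\ref{lem:PTlemma2.2}; your sign bookkeeping for the $\oplus$-merge with the trailing $\ol{2a_{r+1}}$ also checks out, since $\sgn(q_1)(|q_1|+2)$ is exactly the effect of replacing $2a_1-1$ by $2a_1+1$ in the first $\oplus$-block. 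What your route buys is a shorter and more transparent argument for Theorems~\ref{thm:MHS21} and \ref{thm:MHS212} specifically (only letters $1$ and $2$ occur, so only two transfers are needed, and the base case is a one-line partial-fraction expansion equivalent to \cite[(19)]{HessamiPilehrood2Ta2013}); what the paper's route buys is uniformity, since its top-index decomposition and Lemma~\ref{lem:PTlemma2.2} extend verbatim to the later theorems with entries $c\ge 3$, where your method would need a transfer for $\sum_k k^{-c}\calH_k$ and the composition sums of \eqref{lem:combinatorial} would reappear.
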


\begin{rem}\label{rem:MHS2-1formula1}
When $r=0$ Theorem \ref{thm:MHS212} implies \cite[(19)]{HessamiPilehrood2Ta2013}.
When $r=1$ Theorem \ref{thm:MHS212} becomes Theorem \ref{thm:HP2T-MHSid21+212}(B).
When $r=2$ we get the following: for all $n,c\in \N$ and $a,b\in \N_0$
\begin{multline}\label{equ:MHS2-1formular=2}
H^\star_n(\{2\}^a,1,\{2\}^b,1,\{2\}^c)
=-2\sum_{k=1}^n\frac{(-1)^{k}\binom{n}{k}}{k^{2(a+b+c)+2}\binom{n+k}{k}}
-4\sum_{k=1}^n\frac{H_{k-1}(\ol{2c})\binom{n}{k}}{k^{2a+2b+2}\binom{n+k}{k}}\\
-4\sum_{k=1}^n\frac{H_{k-1}(\ol{2b+1+2c})\binom{n}{k}}{k^{2a+1}\binom{n+k}{k}}
-8\sum_{k=1}^n\frac{H_{k-1}(2b+1,\ol{2c})\binom{n}{k}}{k^{2a+1}\binom{n+k}{k}}.
\end{multline}
\end{rem}

\section{A combinatorial lemma}
In this short section we prove the following combinatorial identities which are similar in spirit to
\cite[Lemma~2.2]{HessamiPilehrood2Ta2013}. We will need these results several times throughout this paper.
\begin{lem}\label{lem:PTlemma2.2}
Let $k, n\in\N$, $a\in\N_0$,
$A^{(m)}_{n,k}=(-1)^{k}\binom{mn}{n-k}c_n^{(m)}$ where $c_n^{(m)}$ is an arbitrary sequence independent of $k$,
and $B^{(m)}_{n,k}=\binom{mn}{n-k}c_n^{(m)}$. Suppose $a\in \N_0$ and $c\in\N$. Then

\begin{enumerate}
\item[\upshape (i)]
We have
\begin{align}\label{lem:combinatorial}
\frac{1}{n^c}\sum_{k=1}^n \frac{H_{k-1}(\bfv)A^{(m)}_{n,k} }{k^a}
=\sum_{k=1}^n \frac{H_{k-1}(\bfv)A^{(m)}_{n,k}}{k^{a+c}}
+\underset{j\ge 0, x_r>a}{\sum_{j+|\bfx|=a+c}}
 m^{l(\bfx)} \sum_{k=1}^n\frac{H_{k-1}(\bfx,\bfv)A^{(m)}_{n,k}}{k^{j}},
\end{align}
where $x_r$ denotes the last component of $\bfx\in \N^r$.

\item[\upshape (ii)]
We have
\begin{align}\label{equ:PTlemma2.2b}
n\sum_{k=1}^n \frac{H_{k-1}(\bfv)B^{(2)}_{n,k}}{k^a}
=\sum_{k=1}^n \frac{H_{k-1}(\bfv)B^{(2)}_{n,k}}{k^{a-1}}
+ 2\sum_{k=1}^n H_{k-1}(a,\bfv) k B^{(2)}_{n,k}.
\end{align}

\item[\upshape (iii)]
We have
\begin{align}\label{equ:PTlemma2.2c}
\frac{1}{n^c}\sum_{k=1}^n \frac{H_{k-1}(\bfv)B^{(m)}_{n,k} }{k^a}
=\sum_{k=1}^n \frac{H_{k-1}(\bfv)B^{(m)}_{n,k}}{k^{a+c}}
+\underset{j\ge 0, x_r<-a}{\sum_{j+|\bfx|=a+c}}
 m^{l(\bfx)} \sum_{k=1}^n\frac{H_{k-1}(\bfx,\bfv)A^{(m)}_{n,k}}{k^{j}},
\end{align}
where $x_r$ denotes the last component of $\bfx\in \N^{r-1}\times \Z^*$.

\item[\upshape (iv)]
We have
\begin{align}\label{equ:PTlemma2.2d}
n\sum_{k=1}^n \frac{H_{k-1}(\bfv)A^{(2)}_{n,k} }{k^a}
=\sum_{k=1}^n \frac{H_{k-1}(\bfv)A^{(2)}_{n,k}}{k^{a-1}}
+ 2 \sum_{k=1}^n H_{k-1}(\ol{a},\bfv)k B^{(2)}_{n,k}.
\end{align}
\end{enumerate}
\end{lem}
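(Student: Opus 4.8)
The plan is to prove all four identities by the same mechanism: expressing the extra factor $1/n^c$ (or $n$) in terms of a telescoping/partial-fractions manipulation that interacts with the binomial factor, and then recognizing the resulting sums as multiple harmonic sums of one greater depth. The four parts are clearly variants of a single computation, differing only in the sign pattern $(-1)^k$ versus $1$ and in whether the iterated factor is raised or lowered; so I would prove (i) carefully and then indicate how the sign bookkeeping changes for (ii)--(iv).

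\emph{Key algebraic identity.} The heart of part (i) is a relation of the form
\begin{equation*}
\frac{1}{n^c}\cdot\frac{A^{(m)}_{n,k}}{k^a}
=\frac{A^{(m)}_{n,k}}{k^{a+c}}
+(\text{terms producing } H_{k-1}(\bfx)\text{ with } x_r>a).
\end{equation*}
I would obtain this by iterating the single-step case $c=1$. For $c=1$ the plan is to write $1/(nk^a)$ using the elementary partial-fraction decomposition together with the key feature of $A^{(m)}_{n,k}=(-1)^k\binom{mn}{n-k}c_n^{(m)}$: the ratio $A^{(m)}_{n,k}/A^{(m)}_{n,k'}$ for neighboring indices is a rational function of $n,k$ that, when combined with $1/n$, telescopes against a shift $k\mapsto k-1$ inside the binomial. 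Concretely, one uses that $\binom{mn}{n-k}$ satisfies a recurrence in $k$ that lets $\frac{1}{n}\binom{mn}{n-k}$ be rewritten so that the factor $m$ and a new summation variable (becoming the innermost index of $H_{k-1}$) are introduced — this is precisely what creates the $m^{l(\bfx)}$ weight and prepends the string $\bfx$ to $\bfv$. The constraint $x_r>a$ (the last entry of $\bfx$ exceeds $a$) should fall out naturally because the newly created innermost index runs over exponents strictly larger than the existing $a$ before redistribution.

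\emph{Induction on $c$.} Once the $c=1$ step is established, I would induct on $c$: apply $1/n$ once to reduce $1/n^c$ to $1/n^{c-1}$, use the inductive hypothesis, and then absorb the newly generated MHS terms, checking that the exponent constraint $j+|\bfx|=a+c$ and $x_r>a$ is preserved under the shift. The bookkeeping that a one-step application of $1/n$ to a depth-$(r)$ term produces only depth-$(r+1)$ terms with last entry constrained, plus a depth-preserving term with $a\mapsto a+1$, is what makes the sum over all compositions $\bfx$ with $j+|\bfx|=a+c$ close up correctly.

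\emph{Sign variants and the main obstacle.} For parts (ii) and (iv), the multiplier is $n$ rather than $1/n^c$, so instead of partial fractions one uses the dual recurrence that \emph{raises} the binomial argument, converting $n\binom{2n}{n-k}$ into a combination involving $k\binom{2n}{n-k}$ and a shifted term; the appearance of $kB^{(2)}_{n,k}$ on the right and the switch between $A$ and $B$ (hence an injected sign $(-1)^k$) in (iv) is exactly this recurrence applied to the alternating family. For part (iii) the mechanism mirrors (i) but with $B^{(m)}$ in place of $A^{(m)}$; the crucial sign subtlety is that the generated innermost index carries a negative sign, forcing $x_r<-a$ and switching the companion sum to $A^{(m)}$ on the right-hand side. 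I expect the main obstacle to be precisely this sign tracking: verifying that applying the binomial recurrence to $B^{(m)}$ flips the parity of the newly created index so that the output genuinely involves $A^{(m)}_{n,k}$ with $\overline{x_r}$ entries, and ensuring the exponent/sign constraints ($x_r>a$ in (i) versus $x_r<-a$ in (iii), and the $\overline{a}$ entry in (iv)) are the honest consequence of the recurrence rather than an artifact — this is where a clean, uniform formulation of the single-step lemma will be essential to avoid case-by-case errors.
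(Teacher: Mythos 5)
Your overall strategy is sound and, for parts (ii) and (iv), is essentially the route the paper takes; for (i) and (iii) the paper simply cites \cite[Lemma~2.2]{HessamiPilehrood2Ta2013} (with the index ordering reversed), whereas you propose to reprove them by induction on $c$, which is a legitimate and more self-contained plan. The induction bookkeeping you describe does close up correctly: the $c=1$ step creates exactly one new composition entry $a+1$ adjacent to $\bfv$, and iterating produces all $\bfx$ with $j+|\bfx|=a+c$ and $x_r>a$. One caution: in this paper's convention the newly created index is the \emph{outermost} entry of $H_{k-1}(\bfx,\bfv)$, not the innermost as you write; the paper explicitly warns that its ordering is opposite to that of \cite{HessamiPilehrood2Ta2013}, and this is an easy place to go wrong.

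There is, however, a genuine gap: the entire content of the lemma is carried by two closed-form evaluations of partial sums of the binomial weights, and you never state or prove either of them. For (i)/(iii) the single-step identity you need is
$$ m\sum_{k=l+1}^{n} A^{(m)}_{n,k} \;=\; \frac{l-n}{n}\,A^{(m)}_{n,l}, $$
which follows from $(n-l)\binom{mn}{n-l}=mn\binom{mn-1}{n-l-1}$ together with $\sum_{i=0}^{N}(-1)^i\binom{M}{i}=(-1)^N\binom{M-1}{N}$; interchanging the order of summation in $\sum_l \frac{H_{l-1}(\bfv)}{l^{a+1}}\cdot m\sum_{k>l}A^{(m)}_{n,k}$ is what converts the correction term coming from $\frac{1}{nk^a}-\frac{1}{k^{a+1}}=\frac{k-n}{nk^{a+1}}$ into $m\sum_k H_{k-1}(a+1,\bfv)A^{(m)}_{n,k}$, and the extra factor $(-1)^l$ that appears when one starts from $B^{(m)}$ instead of $A^{(m)}$ is precisely what makes the new entry negative in (iii). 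For (ii)/(iv) the needed identity is
$$ 2\sum_{k=l+1}^{n} k\,B^{(2)}_{n,k} \;=\; (n-l)\,B^{(2)}_{n,l}, $$
which is \eqref{equ:PTlem2.1(7)}, i.e.\ \cite[Lemma~2.1]{HessamiPilehrood2Ta2013}, and is special to $m=2$; once it is in hand, $n/k^a-1/k^{a-1}=(n-k)/k^a$ and a single interchange of summation give (ii) and (iv) in three lines, exactly as the paper does. Your description of this step as a recurrence converting $n\binom{2n}{n-k}$ into $k\binom{2n}{n-k}$ plus a shifted term gestures at the right telescoping but is not the identity itself. Without these two displayed formulas nothing in the proposal can actually be checked, so the main obstacle is not the sign tracking you flag but the absence of the combinatorial identities on which all four parts rest.
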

\begin{proof}
We need to mention again that the ordering is reversed in this paper so $s_1$ in \cite[Lemma~2.2]{HessamiPilehrood2Ta2013}
should be the last component of $\bfx$ in our setup. Now, equation \eqref{lem:combinatorial}
follows from \cite[Lemma~2.2]{HessamiPilehrood2Ta2013} directly. We may also use this proof for
\eqref{equ:PTlemma2.2c} by taking the sign of $x_r$ into consideration.

Now by the identity proved in \cite[Lemma 2.1]{HessamiPilehrood2Ta2013}
\begin{equation}\label{equ:PTlem2.1(7)}
2\sum_{k=l+1}^n\frac{k\binom{n}{k}}{\binom{n+k}{k}}=
\frac{n\binom{n-1}{l}}{\binom{n+l}{l}}=\frac{(n-l)\binom{n}{l}}{\binom{n+l}{l}}
\end{equation}
we see that
\begin{align*}
2\sum_{k=1}^n H_{k-1}(a,\bfv) k B^{(2)}_{n,k}
&=\sum_{l=1}^n \frac{H_{l-1}(\bfv)}{l^{a}}\sum_{k=l+1}^n 2kB^{(2)}_{n,k}\\
&=\sum_{l=1}^n\frac{H_{l-1}(\bfv)}{l^{a}} (n-l) B^{(2)}_{n,l}\\
&=n\sum_{l=1}^n\frac{H_{l-1}(\bfv) B^{(2)}_{n,l}}{l^{a}}-\sum_{l=1}^n\frac{H_{l-1}(\bfv) B^{(2)}_{n,l}}{l^{a-1}}
\end{align*}
which is \eqref{equ:PTlemma2.2b}. Similar argument yields \eqref{equ:PTlemma2.2d}.
We leave the details to the interested reader.
\end{proof}

\begin{rem} \label{rem:choiceAm}
In this paper we will always choose $c_n^{(1)}=1$ so that $A^{(1)}_{n,k}=\binom{n}{k}$ and
 $c_n^{(2)}=(n!)^2/(2n)!$ so that $A^{(2)}_{n,k}=\binom{n}{k}/\binom{n+k}{k}.$
\end{rem}

\section{Proof of Theorems~\ref{thm:MHS21} and \ref{thm:MHS212}, $\oll{\bt\hyf 1}$ and $\oll{\bt\hyf 1}\hyf\bt$ formulas}\label{sec:MZSV21&212proof}
In the case $n=1$ both statements in
Theorems~\ref{thm:MHS21} and \ref{thm:MHS212} become $1=1$ trivially. We now assume $n\ge 2$. By definition,
for $\bfs=(\{2\}^{a_1},1,\dots,\{2\}^{a_r},1)$ we have
\begin{align*}
    H^\star_n(\bfs)=&\sum_{l=0}^{a_1} \frac{1}{n^{2a_1-2l}} H^\star_{n-1}(\{2\}^l,1,\{2\}^{a_2},1,\ldots,\{2\}^{a_r},1)\\
    +&\frac{1}{n^{2a_1+1}}H^\star_n(\{2\}^{a_2},1,\ldots,\{2\}^{a_r},1).
\end{align*}
By induction on $r+n$ we see that
\begin{align*}
    H^\star_n(\bfs)=&\sum_{l=0}^{a_1} \frac{1}{n^{2a_1-2l}}
    \sum_{\bfq\in\Pi(2l+1,2a_2+1,\ldots,2a_r+1)}  2^{\ell(\bfq)}\calH_{n-1}(\bfq) \\
    +& \frac{1}{n^{2a_1+1}}   \sum_{\bfp\in\Pi(2a_2+1,\ldots,2a_r+1)}   2^{\ell(\bfp)} \calH_n(\bfp)  \notag \\
=& \sum_{l=0}^{a_1} \frac{1}{n^{2a_1-2l}}
 \sum_{(q_1,\dots,q_m)\in\Pi(1,2a_2+1,\ldots,2a_r+1)}  2^m \sum_{k=1}^{n-1} \frac{1}{k^{2l+q_1} } \sqfr{n-1}{k}\,
 H_{k-1}(q_2,\dots,q_m)   \\
    +&\frac{1}{n^{2a_1+1}}   \sum_{\bfp\in\Pi(2a_2+1,\ldots,2a_r+1)}   2^{\ell(\bfp)} \calH_n(\bfp)
\end{align*}
By changing the order of summations and using the identity
\begin{equation}\label{equ:geomSum}
\sum_{l=0}^{a_1} \left(\frac{n}{k}\right)^{2l}=\frac{1}{k^{2a_1}}\cdot \frac{n^{2a_1+2}-k^{2a_1+2}}{(n-k)(n+k)}
\end{equation}
we see easily that
\begin{align}
  H^\star_n(\bfs)=&
 \sum_{(q_1,\dots,q_m)\in\Pi(1,2a_2+1,\ldots,2a_r+1)}  \sum_{k=1}^n \Big(1-\frac{k^{2a_1+2}}{n^{2a_1+2}} \Big)
 \frac{2^m }{k^{2a_1+q_1} } \sqfr{n}{k}\,  H_{k-1}(q_2,\dots,q_m)  \notag  \\
    +&\frac{1}{n^{2a_1+1}}   \sum_{\bfp\in\Pi(2a_2+1,\ldots,2a_r+1)}   2^{\ell(\bfp)} \calH_n(\bfp)      \notag \\
 =& \sum_{\bfp\in\Pi(2a_1+1,2a_2+1,\ldots,2a_r+1)}  2^{\ell(\bfp)} \calH_n(\bfp)
 +\sum_{(a, \bfv) \in\Pi(2a_2+1,\ldots,2a_r+1)}   2^{\ell(\bfv)+1} \cdot  \notag  \\
 &  \cdot  \left( n    \sum_{k=1}^n \frac{H_{k-1}(\bfv)\binom{n}{k}}{k^a\binom{n+k}{k}}
 - \sum_{k=1}^{n}\frac{H_{k-1}(\bfv)\binom{n}{k}}{k^{a-1} \binom{n+k}{k}}
 -2\sum_{k=1}^{n} \frac{k H_{k-1}(a,\bfv)\binom{n}{k}} {\binom{n+k}{k}}\right) . \label{equ:vashingTerm}
\end{align}
Here the middle (resp.\ the last) term of \eqref{equ:vashingTerm} is obtained by taking
$(q_1,\dots,q_m)=(1+a,\bfv)$
(resp.\  $(q_1,\dots,q_m)=(1,a,\bfv).$
Notice the expression inside the pair of parentheses above vanishes by
\eqref{equ:PTlemma2.2b} of Lemma~\ref{lem:PTlemma2.2}.
This completes the proof of Theorem~\ref{thm:MHS21}.
Exactly the same argument works almost word for word for
Theorem~\ref{thm:MHS212} so we leave the details to the interested reader.

We can now prove Theorems~\ref{thm:MZSV21} and \ref{thm:MZSV212}
by using Theorems~\ref{thm:MHS21} and \ref{thm:MHS212} and
the following key lemma proved in \cite{LinebargerZh2013}.
\begin{lem} \label{lem:limitBound} \emph{\cite[Lemma~4.2]{LinebargerZh2013}}
Let $d\in\N_0$ and let $e$ be a real number with $e>1$. Then for all $\bfs\in(\Z^*)^d$ ($\bfs=\emptyset$ if $d=0$) we have
 \begin{equation}\label{equ:limit0}
\lim_{n\to\infty} \sum_{k=1}^n\frac{|H_{k-1} (\bfs)|}
 {k^e} \left(1-\frac{\binom{n}{k}}{\binom{n+k}{k}}\right)=0.
\end{equation}
\end{lem}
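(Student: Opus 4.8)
The plan is to combine a pointwise estimate on the binomial factor with a uniform summable majorant, and then pass to the limit by a dominated-convergence (Tannery) argument. First I would rewrite the ratio as a telescoping product: for $1\le k\le n$,
\[
\frac{\binom{n}{k}}{\binom{n+k}{k}}=\prod_{i=1}^{k}\frac{n+1-i}{n+i}=\prod_{i=1}^{k}\Bigl(1-\frac{2i-1}{n+i}\Bigr),
\]
each factor lying in $(0,1]$, so that $1-\binom{n}{k}/\binom{n+k}{k}$ is nonnegative and bounded by $1$. Using the elementary inequality $1-\prod_i(1-a_i)\le\sum_i a_i$, valid for $a_i\in[0,1]$, I obtain the quantitative bound
\[
0\le 1-\frac{\binom{n}{k}}{\binom{n+k}{k}}\le\sum_{i=1}^{k}\frac{2i-1}{n+i}\le\frac{1}{n}\sum_{i=1}^{k}(2i-1)=\frac{k^2}{n}.
\]
In particular the factor stays bounded by $1$, and for each fixed $k$ it tends to $0$ as $n\to\infty$.

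Next I would control the size of the harmonic sums. Since $|s_i|\ge 1$ for every $i$, replacing each argument by $1$ only increases the absolute value termwise, so $|H_{k-1}(\bfs)|\le H_{k-1}(\{1\}^d)\le\tfrac{1}{d!}\bigl(H_{k-1}(1)\bigr)^d=O\bigl((\log k)^d\bigr)$. Hence for any $\delta>0$ we have $|H_{k-1}(\bfs)|=O_\delta(k^\delta)$, and choosing $0<\delta<e-1$ shows that $\sum_{k\ge 1}|H_{k-1}(\bfs)|/k^e$ converges. This convergent series is the uniform majorant I need, since for every $n$ and every $k\le n$,
\[
0\le\frac{|H_{k-1}(\bfs)|}{k^e}\Bigl(1-\frac{\binom{n}{k}}{\binom{n+k}{k}}\Bigr)\le\frac{|H_{k-1}(\bfs)|}{k^e}.
\]

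Finally I would conclude in one of two equivalent ways. The cleanest is Tannery's theorem: extending the summand by $0$ for $k>n$, each term tends to $0$ as $n\to\infty$ by the pointwise vanishing of the binomial factor, and is dominated by the summable sequence $|H_{k-1}(\bfs)|/k^e$, so the limit of the sum equals the sum of the limits, namely $0$. Alternatively one can split at a threshold $K$: the tail $\sum_{k>K}|H_{k-1}(\bfs)|/k^e$ is made smaller than any prescribed $\varepsilon$ by taking $K$ large (bounding the binomial factor by $1$), while the head $\sum_{k\le K}\frac{|H_{k-1}(\bfs)|}{k^e}\cdot\frac{k^2}{n}\le\frac{1}{n}\sum_{k\le K}\frac{|H_{k-1}(\bfs)|}{k^{e-2}}\to 0$ as $n\to\infty$ with $K$ fixed. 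The only genuinely delicate point is the interchange of limit and summation forced by the moving upper index $n$; the polylogarithmic growth bound on $|H_{k-1}(\bfs)|$ together with the estimate $1-\binom{n}{k}/\binom{n+k}{k}\le k^2/n$ is exactly what makes both the majorant summable and the head vanish, so that is where the effort concentrates.
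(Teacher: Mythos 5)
Your argument is correct and complete. Note that the paper itself gives no proof of this lemma; it is quoted verbatim from \cite[Lemma~4.2]{LinebargerZh2013}, so there is nothing internal to compare against. Your three ingredients are exactly what is needed and are each sound: the telescoping product $\binom{n}{k}/\binom{n+k}{k}=\prod_{i=1}^{k}\bigl(1-\tfrac{2i-1}{n+i}\bigr)$ together with $1-\prod_i(1-a_i)\le\sum_i a_i$ gives the clean estimate $0\le 1-\binom{n}{k}/\binom{n+k}{k}\le k^2/n$; the bound $|H_{k-1}(\bfs)|\le H_{k-1}(\{1\}^d)\le\tfrac{1}{d!}H_{k-1}(1)^d=O((\log k)^d)$ uses only $|s_i|\ge 1$ and so holds for all $\bfs\in(\Z^*)^d$ as required; and since $e>1$ this makes $\sum_k |H_{k-1}(\bfs)|/k^e$ a convergent majorant, after which either Tannery's theorem or the head--tail split finishes the proof. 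You have correctly identified the interchange of limit and the $n$-dependent summation as the only delicate point, and your quantitative bound $k^2/n$ handles the head cleanly. This is a legitimate, self-contained replacement for the external citation.
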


\noindent\emph{Proof of Theorem}~\ref{thm:MZSV21} \emph{and Theorem}~\ref{thm:MZSV212}.
We observe that in Theorem~\ref{thm:MHS21} the first component
$\ge 2a_1+1\ge 3$, and in Theorem~\ref{thm:MHS212}
the absolute value of first component $\ge 2a_1+1\ge 3$.
Therefore both theorems follow from Lemma~\ref{lem:limitBound} immediately.
\hfill$\square$

\medskip

\begin{rem} \label{rem:specialCase}
In \cite{Yamamoto2012b} Yamamoto considers some algebraic structures depending on a variable $t$
which reflect the properties of MZV and MZSV when $t=0$ and $t=1$, respectively. As he pointed out \cite[Conjecture~4.4]{Yamamoto2012a}
the validity of the $\oll{\bt\hyf 1}$ formula Theorem~\ref{thm:MZSV21} implies that the the algebra structure
of MZSVs of the form $\zeta^\star(\{2\}^{a_1},1,\dots,\{2\}^{a_r},1)$ is reflected by setting $t=1/2$.
\end{rem}

\section{MHS and MZSV identities: $\oll{\bt\hyf c\hyf \bt\hyf1}$ formula or $\bt\hyf1\hyf\oll{\bt\hyf c\hyf\bt\hyf1}$ formula}\label{sec:MHSMZSV2c21&212c21}
In this and the next three sections we utilize the ideas in the previous sections to derive more MHS identities
involving arguments of $(\{2\}^a,1)$-type alternating with those of $(\{2\}^b,c)$-type ($c\ge 3$).
In this section, we start by considering strings ending with $(\{2\}^a,1)$-type.
As before, we study the MHS first and then derive the corresponding MZSV identities by invoking Lemma~\ref{lem:limitBound}.

\begin{thm}\label{thm:MHS2c21&212c21}
Let $a_j,b_j,c_j-3\in\N_0$ for all $j\ge 0$. Then

$(\oll{\bt\hyf c\hyf\bt\hyf1})$: For
$\bfs=(\{2\}^{b_1},c_1,\{2\}^{a_1},1,\dots,\{2\}^{b_r},c_r,\{2\}^{a_r},1),$ $r\ge 1$, we have
\begin{equation*}
 H_n^{\star}(\bfs)=\sum_{\bfp\in \Pi(\ol{2b_1+2},\, \{1\}^{c_1-3},\,  \ol{2a_1+2},\,  \ldots,\,
                            \ol{2b_r+2},\, \{1\}^{c_r-3},\,  \ol{2a_r+2})} 2^{\ell(\bfp)} \calH_n(\bfp).
\end{equation*}

$(\bt\hyf1\hyf\oll{\bt\hyf c\hyf\bt\hyf1})$: For
$\bfs=(\{2\}^{a_0},1,\{2\}^{b_1},c_1,\{2\}^{a_1},1,\ldots,\{2\}^{b_r},c_r,\{2\}^{a_{r}},1)$, $r\ge 0$, we have
\begin{equation*}
 H_n^{\star}(\bfs)=\sum_{\bfp\in \Pi(2a_0+1,\,  \ol{2b_1+2},\{1\}^{c_1-3},\, \ol{2a_1+2},\,  \ldots,\,
                            \ol{2b_r+2},\, \{1\}^{c_r-3},\,  \ol{2a_r+2})}  2^{\ell(\bfp)} \calH_n(\bfp).
\end{equation*}
\end{thm}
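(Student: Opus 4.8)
The plan is to prove both identities in Theorem~\ref{thm:MHS2c21&212c21} simultaneously by induction, exactly mirroring the strategy that succeeded for Theorems~\ref{thm:MHS21} and~\ref{thm:MHS212}. The induction will be on the quantity $r+n$ (or on the total number of blocks plus $n$), with the base cases $n=1$ reducing both sides to the trivial identity $1=1$. For the inductive step I first peel off the outermost $(\{2\}^{b_1},c_1)$ block (for the first identity) or the leading $(\{2\}^{a_0},1)$ block (for the second) using the standard star-stuffle recursion
\begin{equation*}
H^\star_n(\{2\}^{b},c,\,\bft)=\sum_{l=0}^{b}\frac{1}{n^{2b-2l}}\,H^\star_{n-1}(\{2\}^{l},c,\,\bft)
+\frac{1}{n^{2b+c}}\,H^\star_n(\bft),
\end{equation*}
which splits off a term where the top index is strictly repeated (the $H^\star_{n-1}$ sum) and a term where it is not. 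Applying the induction hypothesis to each of these lower-complexity MHS converts the right-hand side into a sum of $\calH_{n-1}$ and $\calH_n$ expressions indexed by $\Pi(\cdots)$.

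The second step is to collapse the geometric sum over $l$ using \eqref{equ:geomSum}, which produces the crucial factor $\bigl(1-k^{2b_1+c_1}/n^{2b_1+c_1}\bigr)$ in front of each $\calH_n$-type summand and aligns the shifted argument $2l+q_1$ with the target index. After reorganizing, the leading term should reassemble into $\sum_{\bfp}2^{\ell(\bfp)}\calH_n(\bfp)$ over the correct index set $\Pi(\ol{2b_1+2},\{1\}^{c_1-3},\ol{2a_1+2},\dots)$, while the remaining terms must be shown to cancel. Here the new feature relative to the $\oll{\bt\hyf1}$ case is the interior string $\{1\}^{c_1-3}$ coming from $c_1\ge3$, so I expect to track how the $c_1-3$ ones interleave with the O-plus operation; this is bookkeeping but it is where the precise form of $\Pi$ matters most.

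The third step is the cancellation of the leftover terms, which is where the combinatorial Lemma~\ref{lem:PTlemma2.2} enters. Because the top indices are now negative (the arguments carry bars, $\ol{2b_1+2}$ and $\ol{2a_1+2}$), I anticipate needing parts (iii) and (iv) of Lemma~\ref{lem:PTlemma2.2}, which handle the sign-alternating sequences $A^{(2)}_{n,k}$ and the negative last-component condition $x_r<-a$, rather than parts (i) and (ii) used in the unsigned case. Grouping the residual terms as $(1+a,\bfv)$ versus $(1,a,\bfv)$ (or their signed analogues) should produce exactly the combination $n\sum\cdots-\sum\cdots-2\sum\cdots$ appearing inside the parentheses of \eqref{equ:vashingTerm}, which vanishes by the relevant identity of Lemma~\ref{lem:PTlemma2.2}.

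The main obstacle will be the second step: verifying that the index sets match. In the earlier theorems the only interplay was between consecutive odd numbers $2a_j+1$ under $\oplus$, but now one must check that attaching the block $(\ol{2b_1+2},\{1\}^{c_1-3},\ol{2a_1+2})$ to the front of $\Pi(\dots)$ via the recursion reproduces exactly $\Pi(\ol{2b_1+2},\{1\}^{c_1-3},\ol{2a_1+2},\dots)$ — in particular that the $\oplus$ operation \eqref{eqn:Oplus} interacts correctly with the internal $\{1\}^{c_1-3}$ segment and respects the signs dictated by $\sgn$. Once the bookkeeping is confirmed, the cancellation via Lemma~\ref{lem:PTlemma2.2}(iii),(iv) and the assembly of the main term are routine, and the two stated formulas follow together. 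The corresponding MZSV identities are then obtained, as in the $\oll{\bt\hyf1}$ case, by letting $n\to\infty$ and applying Lemma~\ref{lem:limitBound}, using that the absolute value of the first component of each $\bfp$ is at least $2b_1+2\ge2$ (or $2a_0+1\ge1$, handled with the appropriate convergence argument) to guarantee the error terms vanish.
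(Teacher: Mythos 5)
Your plan follows the paper's proof essentially verbatim: induction on $r+n$ with base case $n=1$, peeling off the leading $(\{2\}^{b_1},c_1)$ or $(\{2\}^{a_0},1)$ block via the star recursion, collapsing the geometric sum with \eqref{equ:geomSum}, matching the index sets under $\Pi$, and killing the residual terms with Lemma~\ref{lem:PTlemma2.2}. Two small slips to correct in execution: the collapsed factor is $\bigl(1-k^{2b_1+2}/n^{2b_1+2}\bigr)$, not $\bigl(1-k^{2b_1+c_1}/n^{2b_1+c_1}\bigr)$, and the cancellation in fact uses all four parts of Lemma~\ref{lem:PTlemma2.2} --- parts (i) and (iii) in the $\oll{\bt\hyf c\hyf\bt\hyf1}$ case (since the merged component $\bar{1}\oplus p_1$ can carry either sign) and parts (ii) and (iv) in the leading $\bt\hyf1$ case --- rather than only (iii) and (iv).
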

\begin{proof}
We proceed by induction on $n$. When $n=1$ the theorem is clear. Assume now the theorem is
true for all $n+r\le N$ where $N\ge 2$. Suppose we have $n\ge 2$ and $n+r=N+1$.
Let $\bfs=(\{2\}^{b_1},c_1,\{2\}^{a_1},1,\dots,\{2\}^{b_r},c_r,\{2\}^{a_r},1)$.
We have by definition
\begin{align*}
    H^\star_n(\bfs)=&\sum_{l=0}^{b_1} \frac{1}{n^{2b_1-2l}}H^\star_{n-1}(\{2\}^l,c_1,\{2\}^{a_1},1,\{2\}^{b_2},\ldots,\{2\}^{b_r},c_r,\{2\}^{a_r},1) \\
    +&\frac{1}{n^{2b_1+c_1}}H^\star_n(\{2\}^{a_1},1,\{2\}^{b_2},c_2,\{2\}^{a_2},1,\ldots,\{2\}^{b_r},c_r,\{2\}^{a_r},1).
\end{align*}
For ease of reading we define the following index sets: for any composition $\bfv$ of integers
\begin{align*}
    I(\bfv)= &J(\bfv,\, \{1\}^{c_1-3},\,  \ol{2a_1+2}),\\
    J(\bfv)=&\Pi(\bfv,\,  \ol{2b_2+2},\{1\}^{c_2-3},\, \ol{2a_2+2},\,  \ldots,\,
                            \ol{2b_r+2},\, \{1\}^{c_r-3},\,  \ol{2a_r+2}).
\end{align*}
Then by induction assumption
\begin{align*}
 H^\star_n(\bfs)=&\sum_{l=0}^{b_1} \frac{1}{n^{2b_1-2l}}
   \sum_{\bfq\in  I(\ol{2l+2})}  2^{\ell(\bfq)} \calH_{n-1}(\bfq)
+\frac{1}{n^{2b_1+c_1}}\sum_{\bfp\in J(2a_1+1) } \calH_n(\bfp) \notag\\
=&  \sum_{(q_1,\dots,q_m)\in I(\ol{2})}
 \sum_{l=0}^{b_1} \frac{2^m}{n^{2b_1-2l}}  \sum_{k=1}^{n-1} \frac{\sgn(q_1)^k}{k^{2l+|q_1|} } \sqfr{n-1}{k}\,
 H_{k-1}(q_2,\dots,q_m) \\
+&\frac{1}{n^{2b_1+c_1}}\sum_{\bfp\in J(2a_1+1)}  2^{\ell(\bfp)} \calH_n(\bfp), \notag\\
\end{align*}

By changing the order of summations and using the identity \eqref{equ:geomSum}
we see easily that
{\allowdisplaybreaks
\begin{align*}
 H^\star_n(\bfs)=&
    \sum_{(q_1,\dots,q_m)\in I(\ol{2b_1+2})}
    2^m \sum_{k=1}^n \frac{\sgn(q_1)^k}{k^{|q_1|}} \Big(1-\frac{k^{2b_1+2}}{n^{2b_1+2}} \Big)\sqfr{n}{k}\,
 H_{k-1}(q_2,\dots,q_m) \\
+&\frac{1}{n^{2b_1+c_1}}\sum_{\bfp\in J(2a_1+1)}  2^{\ell(\bfp)} \calH_n(\bfp). \notag\\
\end{align*}
}
Notice that the index set
\begin{equation} \label{equ:myIndexSet}
 I(\ol{2b_1+2})=\bigcup_{(p_1,\dots,p_m)\in J(2a_1+1)} \Big\{\big( \ol{2b_1+2} \circ \underbrace{1 \circ \dots \circ 1}_{c_1-3 \text{ times}}
\circ (\bar 1 \oplus p_1),\dots,p_m \big)  \Big\}.
\end{equation}
For each $(p_1,\dots,p_m)$ we can partition the set \eqref{equ:myIndexSet} into the following subsets:
\begin{alignat*}{4}
 \Big\{ \big((2b_1+c_1)\oplus p_1,\bfv \big)\Big\} \cup
 \Big\{\big(\ol{j+2+2b_1},\bfx,\bar{i}\oplus p_1,\bfv\big)\Big\}, \quad
 \bfv=(p_2,\dots,p_m),
\end{alignat*}
for $i\ge 1$, $j\ge 0$ and positive compositions $\bfy$ with $i+j+|\bfy|=c_1-2$.
We have two cases: (i) $p_1>0$ and (ii) $p_1<0$.
Set $a=|p_1|$. It suffices to prove that in case (i)
\begin{multline}\label{equ:usePTLemma2.2a}
2^{l(\bfy)+1} \sum_{\substack{\ i+j+|\bfy|=c_1-2, \\ i\ge  1,j\ge 0}}
\sum_{k=1}^n\frac{H_{k-1}(\bfy,\ol{i+a},\bfv) (-1)^{k}\binom{n}{k}} {k^{j} \binom{n+k}{k}}\\
=\frac{1}{n^{c_1-2}} \sum_{k=1}^n\frac{H_{k-1}(\bfv) \binom{n}{k}} {k^{a} \binom{n+k}{k}}
-\sum_{k=1}^n\frac{H_{k-1}(\bfv) \binom{n}{k}} {k^{c_1+a-2} \binom{n+k}{k}} ,
\end{multline}
and in case (ii)
\begin{multline}\label{equ:usePTLemma2.2c}
2^{l(\bfy)+1} \sum_{\substack{\ i+j+|\bfy|=c_1-2, \\ i\ge 1,j\ge 0}}
\sum_{k=1}^n\frac{H_{k-1}(\bfy,i+a,\bfv) (-1)^{k}\binom{n}{k}} {k^{j} \binom{n+k}{k}}\\
=\frac{1}{n^{c_1-2}} \sum_{k=1}^n\frac{H_{k-1}(\bfv)  (-1)^{k}\binom{n}{k}} {k^{a} \binom{n+k}{k}}
-\sum_{k=1}^n\frac{H_{k-1}(\bfv)  (-1)^{k}\binom{n}{k}} {k^{c_1+a-2} \binom{n+k}{k}}.
\end{multline}
Equation \eqref{equ:usePTLemma2.2a} follows from \eqref{equ:PTlemma2.2c}
of Lemma~\ref{lem:PTlemma2.2} when
$c=c_1-2$,$ m=2$ and $\bfx=(\bfy,\ol{i+a})$.
Equation \eqref{equ:usePTLemma2.2c} follows from \eqref{lem:combinatorial}  with
the same choice of parameters except $\bfx=(\bfy,i+a)$.
This proves case {\rm ($\oll{\bt\hyf c\hyf\bt\hyf1}$)} when $r+n=N+1$.

Now we turn to case {\rm ($\bt\hyf1\hyf\oll{\bt\hyf c\hyf\bt\hyf1}$)}. Let
$\bfs=(\{2\}^{a_0},1,\{2\}^{b_1},c_1,\{2\}^{a_1},1,\dots,\{2\}^{b_r},c_r,$ $\{2\}^{a_r},1).$
By definition
\begin{align*}
    H^\star_n(\bfs)=& \sum_{l=0}^{a_0} \frac{1}{n^{2a_0-2l}}H^\star_{n-1}(\{2\}^l,1,\{2\}^{b_1},c_1,\{2\}^{a_1},1,\dots,\{2\}^{b_r},c_r,\{2\}^{a_r},1) \\
    +&\frac{1}{n^{2a_0+1}}H^\star_n(\{2\}^{b_1},c_1,\{2\}^{a_1},1,\dots,\{2\}^{b_r},c_r,\{2\}^{a_r},1).
\end{align*}
For ease of reading, for any composition $\bfv$ of integers we set
$$K(\bfv)=\Pi(\bfv,\,  \ol{2b_1+2},\{1\}^{c_1-3},\, \ol{2a_1+2},\,  \ldots,\,
                            \ol{2b_r+2},\, \{1\}^{c_r-3},\,  \ol{2a_r+2}) .$$
By the induction assumption and the fact that we have just proved the case
{\rm ($\oll{\bt\hyf c\hyf\bt\hyf1}$)} with $r+n=N+1$ we see that
{\allowdisplaybreaks
\begin{align*}
    H^\star_n(\bfs)=&\sum_{l=0}^{a_0} \frac{1}{n^{2a_0-2l}}
    \sum_{\bfq\in K(2l+1)}  2^{\ell(\bfq)} \calH_{n-1}(\bfq)
    +\frac{1}{n^{2a_0+1}} \sum_{\bfp\in K(\emptyset)} 2^{\ell(\bfp)} \calH_n(\bfp)  \\
 =& \sum_{(q_1,\dots,q_m)\in K(1)}
 \sum_{l=0}^{a_0} \frac{2^m}{n^{2a_0-2l}} \sum_{k=1}^{n-1} \frac{\sgn(q_1)^k}{k^{2l+|q_1|} } \sqfr{n-1}{k}\,
 H_{k-1}(q_2,\dots,q_m) \\
+&\frac{1}{n^{2a_0+1}} \sum_{\bfp\in K(\emptyset)} 2^{\ell(\bfp)} \calH_n(\bfp).
\end{align*}}
By changing the order of summations and using the identity \eqref{equ:geomSum} we get
\begin{align*}
    H^\star_n(\bfs)=&
    \sum_{(q_1,\dots,q_m)\in K(2a_0+1)}  2^m \sum_{k=1}^n \Big(1-\frac{k^{2a_0+2}}{n^{2a_0+2}}\Big)
    \frac{\sgn(q_1)^k}{k^{|q_1|} } \sqfr{n}{k}\, H_{k-1}(q_2,\dots,q_m) \\
+&\frac{1}{n^{2a_0+1}} \sum_{\bfp\in K(\emptyset) } 2^{\ell(\bfp)} \calH_n(\bfp).
\end{align*}
Notice that the partition of the index set
\begin{align*}
K(2a_0+1)  =\bigcup_{(p_1,\dots,p_m)\in K(\emptyset) } \Big\{\big(2a_0+1,p_1,\bfv \big)  \Big\} \cup
\Big\{\big( (2a_0+1)\oplus p_1,\bfv \big)  \Big\},
\end{align*}
where $\bfv=(p_2,\dots,p_m).$
For each $(p_1,\dots,p_m)$ we have two cases: (i) $p_1>0$ and (ii) $p_1<0$.
Set $a=|p_1|$. It suffices to prove that in case (i)
\begin{equation*}
n\sum_{k=1}^n\frac{H_{k-1}(\bfv) \binom{n}{k}} {k^a \binom{n+k}{k}}
-2  \sum_{k=1}^n\frac{kH_{k-1}(a,\bfv)\binom{n}{k}} {\binom{n+k}{k}}
-\sum_{k=1}^n\frac{H_{k-1}(\bfv) \binom{n}{k}} {k^{a-1} \binom{n+k}{k}}=0
\end{equation*}
by \eqref{equ:PTlemma2.2b} of Lemma~\ref{lem:PTlemma2.2}, and in case (ii)
\begin{equation*}
n\sum_{k=1}^n\frac{H_{k-1}(\bfv)(-1)^k\binom{n}{k}} {k^a \binom{n+k}{k}}
-2  \sum_{k=1}^n\frac{kH_{k-1}(\ol{a},\bfv)\binom{n}{k}} {\binom{n+k}{k}}
-\sum_{k=1}^n\frac{H_{k-1}(\bfv)(-1)^k\binom{n}{k}} {k^{a-1} \binom{n+k}{k}}=0
\end{equation*}
by \eqref{equ:PTlemma2.2d} of Lemma~\ref{lem:PTlemma2.2}.
This proves case {\rm ($\bt\hyf1\hyf\oll{\bt\hyf c\hyf\bt\hyf1}$)} when $r+n=N+1$.

We have completed the proof of Theorem~\ref{thm:MHS2c21&212c21}.
\end{proof}

When $r=1$ we get the following: for all $n\in \N$ and $a,b\in \N_0$
\begin{equation}
\label{equ:MHS2321r=1}
H^\star_n(\{2\}^b,3,\{2\}^a,1)
=2\sum_{k=1}^n\frac{\binom{n}{k}}{k^{2(b+a)+4}\binom{n+k}{k}}
+4\sum_{k=1}^n\frac{(-1)^k H_{k-1}(\ol{2a+2})\binom{n}{k}}{k^{2b+2}\binom{n+k}{k}},
\end{equation}
in case {\rm ($\oll{\bt\hyf c\hyf\bt\hyf1}$)}, and in case {\rm ($\bt\hyf1\hyf\oll{\bt\hyf c\hyf\bt\hyf1}$)}
\begin{align}\label{equ:MHS212321r=1}
H^\star_n(\{2\}^{a_1},1,& \{2\}^{b},3,\{2\}^{a_2},1) \\
=&2\sum_{k=1}^n\frac{\binom{n}{k}}{k^{2(a_1+b+a_2+5)}\binom{n+k}{k}}
+4\sum_{k=1}^n\frac{H_{k-1}(2b+2a_2+4)\binom{n}{k}}{k^{2a_1+1}\binom{n+k}{k}}  \notag \\
+&4\sum_{k=1}^n\frac{(-1)^k H_{k-1}(\ol{2a_2+2})\binom{n}{k}}{k^{2a_1+2b+3}\binom{n+k}{k}}
+8\sum_{k=1}^n\frac{H_{k-1}(\ol{2b+2},\ol{2a_2+2})\binom{n}{k}}{k^{2a_1+1}\binom{n+k}{k}} . \notag
\end{align}

\begin{thm}\label{thm:MZSV2c21&212c21}
Let $r\in\N$ and $a_j,b_j,c_j-3\in\N_0$ for all $j\ge 1$. Then

$(\oll{\bt\hyf c\hyf\bt\hyf1})$: For
$\bfs=(\{2\}^{b_1},c_1,\{2\}^{a_1},1,\dots,\{2\}^{b_r},c_r,\{2\}^{a_r},1),$ $r\ge 1$, we have
\begin{equation}\label{equ:MZSV2c21}
\zeta^{\star}(\bfs)=\sum_{\bfp\in \Pi(\ol{2b_1+2},\, \{1\}^{c_1-3},\,  \ol{2a_1+2},\,  \ldots,\,
                            \ol{2b_r+2},\, \{1\}^{c_r-3},\,  \ol{2a_r+2})} 2^{\ell(\bfp)} \zeta(\bfp).
\end{equation}

$(\bt\hyf1\hyf\oll{\bt\hyf c\hyf\bt\hyf1})$: For
$\bfs=(\{2\}^{a_0},1,\{2\}^{b_1},c_1,\{2\}^{a_1},1,\ldots,\{2\}^{b_r},c_r,\{2\}^{a_{r}},1)$, $r\ge 0$
and $a_0\ge 1$, we have
\begin{equation}\label{equ:MZSV212c21}
\zeta^{\star}(\bfs)=\sum_{\bfp\in \Pi(2a_0+1,\,  \ol{2b_1+2},\{1\}^{c_1-3},\, \ol{2a_1+2},\,  \ldots,\,
                            \ol{2b_r+2},\, \{1\}^{c_r-3},\,  \ol{2a_r+2})} 2^{\ell(\bfp)} \zeta(\bfp).
\end{equation}
\end{thm}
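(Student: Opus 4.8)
The plan is to derive both identities as the $n\to\infty$ limits of the multiple harmonic sum identities in Theorem~\ref{thm:MHS2c21&212c21}, following exactly the passage from Theorems~\ref{thm:MHS21} and \ref{thm:MHS212} to Theorems~\ref{thm:MZSV21} and \ref{thm:MZSV212}. In each case the right-hand side of the MHS identity is a \emph{finite} combination $\sum_{\bfp}2^{\ell(\bfp)}\calH_n(\bfp)$, so it suffices to prove $\lim_{n\to\infty}\calH_n(\bfp)=\zeta(\bfp)$ for every index $\bfp$ appearing there, and then interchange the finite sum with the limit.

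The key step is to compare the mollified companion $\calH_n(\bfp)$ with the genuine partial sum $H_n(\bfp)$. Writing $\bfp=(p_1,\dots,p_m)$ and using the second form of \eqref{equ:calHn},
\begin{equation*}
H_n(\bfp)-\calH_n(\bfp)=\sum_{k=1}^n\frac{\sgn(p_1)^k}{k^{|p_1|}}\,H_{k-1}(p_2,\dots,p_m)\Big(1-\sqfr{n}{k}\Big).
\end{equation*}
Since $0\le 1-\sqfr{n}{k}\le 1$, the triangle inequality bounds the modulus of this difference by $\sum_{k=1}^n|H_{k-1}(p_2,\dots,p_m)|\,k^{-|p_1|}\big(1-\sqfr{n}{k}\big)$, and as long as $|p_1|>1$ Lemma~\ref{lem:limitBound} (with $e=|p_1|$) shows this tends to $0$. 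Hence $\lim_{n\to\infty}\calH_n(\bfp)=\lim_{n\to\infty}H_n(\bfp)=\zeta(\bfp)$, the last equality being the definition \eqref{equ:EulerSumDefn}, which is legitimate because $|p_1|>1$ forces $(p_1,\sgn(p_1))\ne(1,1)$.

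What remains is to verify $|p_1|>1$ for every $\bfp$ in both sums, and this is where I expect the only real care to be needed. It is immediate from \eqref{eqn:Oplus} that $a\oplus b=\sgn(a)\sgn(b)(|a|+|b|)$, so $|a\oplus b|=|a|+|b|$; thus the leading entry $p_1$ of any index in a set $\Pi(\cdots)$ has absolute value at least that of the first entry of the generating composition. In case $(\oll{\bt\hyf c\hyf\bt\hyf1})$ the generating composition begins with $\ol{2b_1+2}$, of absolute value $2b_1+2\ge 2$, so $|p_1|\ge 2$ for every $\bfp$; in case $(\bt\hyf1\hyf\oll{\bt\hyf c\hyf\bt\hyf1})$ it begins with $2a_0+1$, and the hypothesis $a_0\ge 1$ gives $|p_1|\ge 2a_0+1\ge 3$. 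This hypothesis is the crux: the MHS identity in Theorem~\ref{thm:MHS2c21&212c21} permits $a_0=0$, but then the leading argument is $1$, both $\zeta^\star(\bfs)$ and the sums $\zeta(\bfp)$ diverge and Lemma~\ref{lem:limitBound} no longer applies---which is exactly why the MZSV statement must require $a_0\ge 1$. With $|p_1|>1$ established throughout, passing to the limit in Theorem~\ref{thm:MHS2c21&212c21} produces \eqref{equ:MZSV2c21} and \eqref{equ:MZSV212c21}.
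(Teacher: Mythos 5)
Your proposal is correct and is essentially the paper's own argument: the paper's proof of Theorem~\ref{thm:MZSV2c21&212c21} is exactly the passage to the limit in Theorem~\ref{thm:MHS2c21&212c21} via Lemma~\ref{lem:limitBound}, with the convergence guaranteed by the observation that the leading entry of every index $\bfp$ has absolute value at least $2$ (resp.\ at least $2a_0+1\ge 3$). Your write-up simply makes explicit the comparison $H_n(\bfp)-\calH_n(\bfp)\to 0$ and the role of the hypothesis $a_0\ge 1$, which the paper leaves implicit.
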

\begin{proof}
This follows from Theorem~\ref{thm:MHS2c21&212c21} and Lemma \ref{lem:limitBound} easily.
\end{proof}
For example, by \eqref{equ:MHS2321r=1} and \eqref{equ:MHS212321r=1} we see that
\begin{equation}\label{equ:MZSV2321r=1}
\zeta^\star(\{2\}^b,3,\{2\}^a,1)
=2\zeta(2a+2b+4)+4\zeta(\ol{2b+2},\ol{2a+2}),
\end{equation}
and
\begin{multline*}
\zeta^\star(\{2\}^{a_1},1,\{2\}^{b},3,\{2\}^{a_2},1)
=2\zeta(2(a_1+b+a_2)+5)
+4\zeta(2a_1+1,2b+2a_2+4) \\
+4\zeta(\ol{2a_1+2b+3},\ol{2a_2+2})
+8\zeta(2a_1+1,\ol{2b+2},\ol{2a_2+2}).
\end{multline*}

\section{MHS and MZSV identities: $\oll{\bt\hyf c\hyf \bt\hyf1}\hyf \bt$ and $\bt\hyf1\hyf\oll{\bt\hyf c\hyf\bt\hyf1}\hyf \bt$ formula}\label{sec:MHSMZSV2c212&212c212}
We continue to study MHS identities
involving arguments of $(\{2\}^a,1)$-type alternating with those of $(\{2\}^b,c)$-type ($c\ge 3$).
Now we consider strings ending with $(\{2\}^a,1)$-type trailed by a non-empty substring of 2's at the very end.

\begin{thm}\label{thm:MHS2c212&212c212}
Let $t,r\in\N$ and $a_j,b_j,c_j-3\in\N_0$ for all $j\ge 1$. Then

$(\oll{\bt\hyf c\hyf\bt\hyf1}\hyf\bt)$. For $\bfs=(\{2\}^{b_1},c_1,\{2\}^{a_1},1,\dots,\{2\}^{b_r},c_r,\{2\}^{a_r},1,\{2\}^t)$ we have
\begin{equation*}
 H_n^{\star}(\bfs)=-\sum_{\bfp\in \Pi(\ol{2b_1+2},\, \{1\}^{c_1-3},\,  \ol{2a_1+2},\,  \ldots,\,
                            \ol{2b_r+2},\, \{1\}^{c_r-3},\,  \ol{2a_r+2}, \ol{2t})} 2^{\ell(\bfp)} \calH_n(\bfp).
\end{equation*}

$(\bt\hyf1\hyf\oll{\bt\hyf c\hyf\bt\hyf1}\hyf\bt)$. For $\bfs=(\{2\}^{a_0},1,\{2\}^{b_1},c_1,\{2\}^{a_1},1,\ldots,\{2\}^{b_r},c_r,\{2\}^{a_{r}},1,\{2\}^t)$, $r\ge 0$, we have
\begin{equation*}
 H_n^{\star}(\bfs)=-\sum_{\bfp\in \Pi(2a_0+1,\,  \ol{2b_1+2},\{1\}^{c_1-3},\, \ol{2a_1+2},\,  \ldots,\,
                            \ol{2b_r+2},\, \{1\}^{c_r-3},\,  \ol{2a_r+2}, \ol{2t})}  2^{\ell(\bfp)} \calH_n(\bfp).
\end{equation*}
\end{thm}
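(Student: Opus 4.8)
The plan is to run the proof of Theorem~\ref{thm:MHS2c21&212c21} essentially verbatim, the only structural change being that the anchor of the induction is now the $\oll{\bt\hyf1}\hyf\bt$ formula (Theorem~\ref{thm:MHS212}) in place of the $\oll{\bt\hyf1}$ formula (Theorem~\ref{thm:MHS21}); this is precisely the passage from Theorem~\ref{thm:MHS21} to Theorem~\ref{thm:MHS212}, now grafted onto the longer $\oll{\bt\hyf c\hyf\bt\hyf1}$-patterns. I would carry out a joint induction on $n+r$ over the two cases, with the trivial base case $n=1$. For $(\oll{\bt\hyf c\hyf\bt\hyf1}\hyf\bt)$ I would split off the leading block $(\{2\}^{b_1},c_1)$ via the defining recursion of $H^\star_n$: the diagonal term lowers $n$ and preserves the $(\oll{\bt\hyf c\hyf\bt\hyf1}\hyf\bt)$ shape (with $b_1$ replaced by some $l\le b_1$), while the term $n^{-(2b_1+c_1)}H^\star_n(\{2\}^{a_1},1,\dots,\{2\}^{a_r},1,\{2\}^t)$ lowers $r$ and falls into the $(\bt\hyf1\hyf\oll{\bt\hyf c\hyf\bt\hyf1}\hyf\bt)$ shape. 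Feeding in the induction hypothesis, changing the order of summation, and applying \eqref{equ:geomSum} reduces the expression to the same form met in Theorem~\ref{thm:MHS2c21&212c21}.

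From here the cancellation is identical. I would partition the resulting index set exactly as in \eqref{equ:myIndexSet}, isolate the seam between the peeled block and its successor, and kill the spurious terms with Lemma~\ref{lem:PTlemma2.2}: for $(\oll{\bt\hyf c\hyf\bt\hyf1}\hyf\bt)$ one uses \eqref{equ:PTlemma2.2c} when $p_1>0$ and \eqref{lem:combinatorial} when $p_1<0$ (the sign of $p_1$ feeding into the $\oplus$ of \eqref{eqn:Oplus} at the seam), and for $(\bt\hyf1\hyf\oll{\bt\hyf c\hyf\bt\hyf1}\hyf\bt)$, handled at the same level $n+r=N+1$ by peeling $(\{2\}^{a_0},1)$ and invoking the first case just established, one uses \eqref{equ:PTlemma2.2b} when $p_1>0$ and \eqref{equ:PTlemma2.2d} when $p_1<0$. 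The global minus sign and the terminal $\ol{2t}$ in the $\Pi$-argument are not produced by any of these steps: they enter solely through the anchor, since the $r=0$ instance of the $(\bt\hyf1\hyf\oll{\bt\hyf c\hyf\bt\hyf1}\hyf\bt)$ string is just $(\{2\}^{a_0},1,\{2\}^t)$, i.e. the $r=1$ case of Theorem~\ref{thm:MHS212}, which already carries both features.

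The one ingredient genuinely absent from Theorem~\ref{thm:MHS2c21&212c21}, and the point I would check most carefully, is the passivity of the trailing $\{2\}^t$. Because the recursion always peels from the front, the $\ol{2t}$ contributed by the anchor permanently occupies the last slot of every $\Pi$-argument and is never the component $p_1$ on which $\oplus$ acts at a seam; it is therefore carried silently inside the suffix $\bfv$ at each application of Lemma~\ref{lem:PTlemma2.2}, and the combinatorial identities apply unchanged. I expect the main (if still routine) obstacle to be exactly this bookkeeping — confirming that $\ol{2t}$ never migrates into a seam and that the overall sign fixed by the anchor propagates consistently through both cases; once that is verified, the argument is word for word that of Theorem~\ref{thm:MHS2c21&212c21}.
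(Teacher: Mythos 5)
Your proposal is correct and matches the paper's approach exactly: the paper's own proof of this theorem is a one-line deferral to the proof of Theorem~\ref{thm:MHS2c21&212c21}, and you have supplied precisely the right adaptation, with the induction anchor shifted from Theorem~\ref{thm:MHS21} to Theorem~\ref{thm:MHS212} (whose bottom cases supply both the global minus sign and the terminal $\ol{2t}$). The one point you flag as delicate is in fact automatic: even when every ``$\circ$'' is ``$\oplus$'' and $p_1$ absorbs $\ol{2t}$, the seam identities of Lemma~\ref{lem:PTlemma2.2} are invoked according to the sign of $p_1$ alone, so no separate bookkeeping for the tail is required.
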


\begin{proof}
The proof is very similar to the proof of Theorem~\ref{thm:MHS2c21&212c21}.
Thus we leave it to the interested reader.
\end{proof}
By taking $n\to\infty$ and using Lemma~\ref{lem:limitBound} we get immediately the following results.

\begin{thm}\label{thm:MZSV2c212&212c212}
Let $t,r\in\N$ and $a_j,b_j,c_j-3\in\N_0$ for all $j\ge 1$. Then

$(\oll{\bt\hyf c\hyf\bt\hyf1}\hyf\bt)$. For
$\bfs=(\{2\}^{b_1},c_1,\{2\}^{a_1},1,\dots,\{2\}^{b_r},c_r,\{2\}^{a_r},1,\{2\}^t),$ $r\ge 1$, we have
\begin{equation}\label{equ:MZSV2c212}
\zeta^{\star}(\bfs)=-\sum_{\bfp\in \Pi(\ol{2b_1+2},\, \{1\}^{c_1-3},\,  \ol{2a_1+2},\,  \ldots,\,
                            \ol{2b_r+2},\, \{1\}^{c_r-3},\,  \ol{2a_r+2}, \ol{2t})} 2^{\ell(\bfp)} \zeta(\bfp).
\end{equation}

$(\bt\hyf1\hyf\oll{\bt\hyf c\hyf\bt\hyf1}\hyf\bt)$: For
$\bfs=(\{2\}^{a_0},1,\{2\}^{b_1},c_1,\{2\}^{a_1},1,\ldots,\{2\}^{b_r},c_r,\{2\}^{a_{r}},1,\{2\}^t)$, $r\ge 0$ and $a_0\ge 1$, we have
\begin{equation}\label{equ:MZSV212c212}
\zeta^{\star}(\bfs)=-\sum_{\bfp\in \Pi(2a_0+1,\,  \ol{2b_1+2},\{1\}^{c_1-3},\, \ol{2a_1+2},\,  \ldots,\,
                            \ol{2b_r+2},\, \{1\}^{c_r-3},\,  \ol{2a_r+2}, \ol{2t})} 2^{\ell(\bfp)} \zeta(\bfp).
\end{equation}
\end{thm}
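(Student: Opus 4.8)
The plan is to obtain the MZSV identities of Theorem~\ref{thm:MZSV2c212&212c212} as limiting cases of the MHS identities just established in Theorem~\ref{thm:MHS2c212&212c212}, following verbatim the strategy already used to pass from Theorem~\ref{thm:MHS2c21&212c21} to Theorem~\ref{thm:MZSV2c21&212c21}. Both identities in Theorem~\ref{thm:MHS2c212&212c212} express $H_n^\star(\bfs)$ as a finite combination $-\sum_\bfp 2^{\ell(\bfp)}\calH_n(\bfp)$ in which the index set $\Pi(\dots)$ and the coefficients $2^{\ell(\bfp)}$ do not depend on $n$. Since the left-hand side satisfies $H_n^\star(\bfs)\to\zeta^\star(\bfs)$ by the very definition \eqref{equ:EulerSumDefn} (convergence being clear because $\bfs$ begins with a $2$), it suffices to show that $\calH_n(\bfp)\to\zeta(\bfp)$ for each $\bfp$ occurring in the sum and then to pass the limit through the finite sum.

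First I would isolate the comparison between the mollified companion and the genuine MHS. Writing $\bfp=(p_1,\dots,p_m)$ and using the second expression in \eqref{equ:calHn}, one has
\begin{equation*}
 \calH_n(\bfp)-H_n(\bfp)=\sum_{k=1}^n \frac{\sgn(p_1)^k}{k^{|p_1|}}H_{k-1}(p_2,\dots,p_m)\left(\frac{\binom{n}{k}}{\binom{n+k}{k}}-1\right),
\end{equation*}
so by the triangle inequality $|\calH_n(\bfp)-H_n(\bfp)|$ is dominated by the quantity appearing in \eqref{equ:limit0} with $e=|p_1|$ and $\bfs=(p_2,\dots,p_m)$. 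Provided $|p_1|>1$, Lemma~\ref{lem:limitBound} forces this difference to tend to $0$; since $H_n(\bfp)\to\zeta(\bfp)$ as well (again because $|p_1|\ge 2$), we conclude $\calH_n(\bfp)\to\zeta(\bfp)$, and taking $n\to\infty$ termwise yields \eqref{equ:MZSV2c212} and \eqref{equ:MZSV212c212}.

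The one point requiring genuine care, and hence the main (if minor) obstacle, is the bookkeeping verification that every $\bfp$ in the two index sets has leading component of absolute value at least $2$, for otherwise $\zeta(\bfp)$ need not converge and Lemma~\ref{lem:limitBound} would not apply. Here the structure of the $\oplus$-operation is decisive: a direct check on the definition \eqref{eqn:Oplus} gives $|a\oplus b|=|a|+|b|$ for all $a,b\in\Z^*$, so merging consecutive entries never decreases the absolute value of the leading argument. Consequently the first component of any $\bfp$ has absolute value at least that of the leading entry of the base composition, namely $2b_1+2\ge 2$ in case $(\oll{\bt\hyf c\hyf\bt\hyf1}\hyf\bt)$ and $2a_0+1\ge 3$ in case $(\bt\hyf1\hyf\oll{\bt\hyf c\hyf\bt\hyf1}\hyf\bt)$, where the hypothesis $a_0\ge 1$ is used. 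Once $|p_1|\ge 2$ is confirmed throughout, the limit argument of the preceding paragraph applies to each summand and the theorem follows.
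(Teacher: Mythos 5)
Your proposal is correct and follows exactly the paper's route: the paper likewise obtains Theorem~\ref{thm:MZSV2c212&212c212} by letting $n\to\infty$ in Theorem~\ref{thm:MHS2c212&212c212} and invoking Lemma~\ref{lem:limitBound}, with the only point needing attention being that the leading component of every $\bfp$ has absolute value exceeding $1$ (guaranteed since $|a\oplus b|=|a|+|b|$ and the base compositions start with $\ol{2b_1+2}$ or $2a_0+1\ge 3$). Your write-up simply makes explicit the termwise comparison $\calH_n(\bfp)-H_n(\bfp)\to 0$ that the paper leaves implicit.
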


For example, taking $r=1$ and $c_1=3$ we get in case ($\oll{\bt\hyf c\hyf\bt\hyf1}\hyf\bt$)
\begin{align*}
\zeta^\star(\{2\}^b,3,\{2\}^a,1,\{2\}^t)
=-2&\zeta(\ol{2b+2a+2t+4})-4\zeta(2b+2a+4,\ol{2t})\\
-4&\zeta(\ol{2b+2},2a+2t+2)-8\zeta(\ol{2b+2},\ol{2a+2},\ol{2t}),
\end{align*}
and in case ($\bt\hyf1\hyf\oll{\bt\hyf c\hyf\bt\hyf1}\hyf\bt$)
\begin{align*}
&\zeta^\star(\{2\}^{a_1},1,\{2\}^b,3,\{2\}^{a_2},1,\{2\}^t)\\
=&-2\zeta(\ol{2a_1+2b+2a_2+2t+5})-4\zeta(2a_1+2b+2a_2+5,\ol{2t})\\
&-4\zeta(\ol{2a_1+2b+3},2a_2+2t+2)-8\zeta(\ol{2a_1+2b+3},\ol{2a_2+2},\ol{2t})\\
&-4\zeta(2a_1+1,\ol{2b+2a_2+2t+4})-8\zeta(2a_1+1,2b+2a_2+4,\ol{2t})\\
&-8\zeta(2a_1+1,\ol{2b+2},2a_2+2t+2)-16\zeta(2a_1+1,\ol{2b+2},\ol{2a_2+2},\ol{2t}).
\end{align*}
We have verified these formulas numerically for $a_1,a_2, a,b,t\le 2$ using EZ-face \cite{EZface}.

\section{MHS identities: $\oll{\bt\hyf1\hyf\bt\hyf c}\hyf\bt$ formula and $\bt\hyf c\ol{\hyf \bt\hyf1\hyf\bt\hyf c}\hyf\bt$ formula}\label{sec:MHSMZSV212c2&2c212c2}
In this section we continue the theme in the preceding sections by deriving more MHS identities
involving compositions of $(\{2\}^a,1)$-type alternating with those of $(\{2\}^b,c)$-type ($c\ge 3$).
This time we consider strings ending with $(\{2\}^a,c)$-type trailed by $\{2\}^t$ (this tail may be empty).
We can then derive the corresponding MZSV identities by applying Lemma~\ref{lem:limitBound}. We omit the
proofs since they are essentially the same as those of Theorem~\ref{thm:MHS2c21&212c21} and
Theorem~\ref{thm:MZSV2c21&212c21}.

\begin{thm}\label{thm:MHS212c2&2c212c2}
Let $r\in\N$ and $t,a_j,b_j,c_j-3\in\N_0$ for all $j\ge 1$. Then

$(\oll{\bt\hyf 1\hyf\bt\hyf c}\hyf\bt)$. For $\bfs=(\{2\}^{a_1},1,\{2\}^{b_1},c_1,\dots,\{2\}^{a_r},1,\{2\}^{b_r},c_r,\{2\}^t)$, we have
\begin{equation*}
 H_n^{\star}(\bfs)=-\sum_{\bfp\in \Pi(2a_1+1,\, \ol{2b_1+2},\, \{1\}^{c_1-3},\,  \ol{2a_2+2},\,  \ldots,\,
                             \ol{2a_r+2},\,  \ol{2b_r+2},\, \{1\}^{c_r-3},\, 2t+1)} 2^{\ell(\bfp)} \calH_n(\bfp).
\end{equation*}

$(\bt\hyf c\hyf\oll{\bt\hyf 1\hyf\bt\hyf c}\hyf\bt)$. For $\bfs=(\{2\}^{b_1},c_1,\{2\}^{a_1},1,\dots,\{2\}^{b_r},c_r,\{2\}^{a_r},1,\{2\}^{b_{r+1}},c_{r+1},\{2\}^t)$ we have
\begin{equation*}
 H_n^{\star}(\bfs)=-\sum_{\bfp\in \Pi( \ol{2b_1+2},\{1\}^{c_1-3},\, \ol{2a_1+2},\,  \ldots,\,\ol{2a_r+2},\,
                            \ol{2b_{r+1}+2},\, \{1\}^{c_{r+1}-3},\,  2t+1)}  2^{\ell(\bfp)} \calH_n(\bfp).
\end{equation*}
\end{thm}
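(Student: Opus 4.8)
The plan is to prove Theorem~\ref{thm:MHS212c2&2c212c2} by induction on $n+r$, mirroring the strategy of Theorem~\ref{thm:MHS2c21&212c21} almost verbatim, with the essential structural difference being that here each maximal block ends with a $(\{2\}^b,c)$-type segment rather than a $(\{2\}^a,1)$-type segment, so the trailing argument on the right-hand side becomes $2t+1$ (a positive odd entry) rather than $\ol{2t}$. I would first fix the base case $n=1$, where both sides collapse to $1=1$, and then write the standard recursion peeling off the leading $2$-string. For case $(\oll{\bt\hyf 1\hyf\bt\hyf c}\hyf\bt)$ I would strip the initial $\{2\}^{a_1}$, producing a sum $\sum_{l=0}^{a_1} n^{-(2a_1-2l)} H^\star_{n-1}(\{2\}^l,1,\dots)$ together with a tail term $n^{-(2a_1+1)} H^\star_n(\{2\}^{b_1},c_1,\dots)$. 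Crucially the tail term is itself of type $(\bt\hyf c\hyf\oll{\bt\hyf 1\hyf\bt\hyf c}\hyf\bt)$, so I must prove the two cases simultaneously (as in Theorem~\ref{thm:MHS2c21&212c21}), establishing one from the other within the same induction on $n+r$.

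Next I would apply the induction hypothesis to rewrite each summand via $\calH_{n-1}$ and $\calH_n$, then invoke the geometric-sum identity \eqref{equ:geomSum} to combine the $l$-sum into a single factor $\bigl(1-k^{2a_1+2}/n^{2a_1+2}\bigr)$ times $\sqfr{n}{k}$. This converts everything into sums of the shape appearing in Lemma~\ref{lem:PTlemma2.2}. The leading argument produced by the peeling is $2a_1+1>0$, so the relevant partition of the index set $K(2a_1+1)$ splits each $(p_1,\dots,p_m)$ according to whether we prepend $2a_1+1$ with a comma or combine it via $\oplus$ with $p_1$; after setting $a=|p_1|$ the two subcases ($p_1>0$ and $p_1<0$) reduce exactly to the vanishing of the parenthetical expression governed by \eqref{equ:PTlemma2.2b} and \eqref{equ:PTlemma2.2d} respectively, precisely as in the $(\bt\hyf1\hyf\oll{\bt\hyf c\hyf\bt\hyf1})$ portion of the earlier proof.

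The genuinely new bookkeeping occurs in case $(\oll{\bt\hyf 1\hyf\bt\hyf c}\hyf\bt)$ when I peel off an interior $(\{2\}^b,c)$-block, where the leading combined argument has the form $\ol{2b_j+2}$ followed by $\{1\}^{c_j-3}$ and the $\oplus$-absorption now runs through a string of $1$'s before meeting the next positive argument. Here I would set up the analogue of the partition \eqref{equ:myIndexSet}, decompose into subsets indexed by $i\ge 1$, $j\ge 0$, and a positive composition $\bfy$ with $i+j+|\bfy|=c_j-2$, and then split into $p_1>0$ versus $p_1<0$. The $p_1>0$ subcase appeals to \eqref{lem:combinatorial} of Lemma~\ref{lem:PTlemma2.2} with $c=c_j-2$, $m=2$, $\bfx=(\bfy,i+a)$, while the $p_1<0$ subcase appeals to \eqref{equ:PTlemma2.2c} with $\bfx=(\bfy,\ol{i+a})$; the sign tracking in $A^{(2)}_{n,k}$ versus $B^{(2)}_{n,k}$ is exactly what distinguishes these. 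The main obstacle, and the reason the author defers the details, is not conceptual but combinatorial: one must verify that the $\oplus$-partition of the argument string is compatible with the constraint $x_r>a$ (resp.\ $x_r<-a$) in Lemma~\ref{lem:PTlemma2.2}, and that the final trailing entry $2t+1$ on the right propagates correctly through every recursive step. Once the index-set partition is matched to the correct clause of Lemma~\ref{lem:PTlemma2.2} in all four sign/position cases, the vanishing is automatic and the induction closes.
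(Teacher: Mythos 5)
Your proposal follows the paper's own proof essentially verbatim: induction on $n+r$, mutual recursion between the two cases (case 1's tail being a case-2 instance of lower index, case 2's tail being a case-1 instance at the same level), the geometric-sum identity \eqref{equ:geomSum}, the two-way split of the index set when the leading block is $(\{2\}^{a_1},1)$ handled by \eqref{equ:PTlemma2.2b} and \eqref{equ:PTlemma2.2d}, and the finer $i,j,\bfy$ partition with $i+j+|\bfy|=c-2$ when the leading block is $(\{2\}^{b},c)$, exactly as in Theorem~\ref{thm:MHS2c21&212c21}.

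There is, however, one concrete slip in the final matching step: you have the two clauses of Lemma~\ref{lem:PTlemma2.2} attached to the wrong sign subcases. When $p_1>0$ the head sum $\sum_k H_{k-1}(\bfv)\binom{n}{k}/(k^{a}\binom{n+k}{k})$ carries no factor $(-1)^k$, i.e.\ it is of $B^{(2)}_{n,k}$-type, and the merged entry is $\bar\imath\oplus p_1=\ol{i+a}$ (negative); so this subcase must invoke \eqref{equ:PTlemma2.2c} with $\bfx=(\bfy,\ol{i+a})$ and the constraint $x_r<-a$. Symmetrically, when $p_1<0$ the head sum is of $A^{(2)}_{n,k}$-type and the merged entry is $\bar\imath\oplus\bar a=i+a>0$, so that subcase uses \eqref{lem:combinatorial} with $\bfx=(\bfy,i+a)$ and $x_r>a$. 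You state the reverse pairing, under which the weights $(-1)^k\binom{n}{k}$ versus $\binom{n}{k}$ and the sign of the merged entry do not match the clause being applied, so the verification would fail as written; the fix is simply to swap the two assignments (this is exactly the pairing used in the proof of Theorem~\ref{thm:MHS2c21&212c21}, case $(\oll{\bt\hyf c\hyf\bt\hyf1})$). A second, purely cosmetic point: the elaborate $i,j,\bfy$ partition arises when a $(\{2\}^{b},c)$-block sits at the \emph{front} of the string, i.e.\ in case $(\bt\hyf c\hyf\oll{\bt\hyf1\hyf\bt\hyf c}\hyf\bt)$ of the present theorem, not ``when peeling an interior block'' within case $(\oll{\bt\hyf1\hyf\bt\hyf c}\hyf\bt)$; each induction step only ever peels the leading block, and the interior blocks are reached through the mutual recursion.
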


Setting $r=0$ in Theorem~\ref{thm:MHS212c2&2c212c2} we recover \cite[Theorem~2.1]{HessamiPilehrood2Ta2013}.
When $r=1$ and $t=0$ we get the following: for all $n\in \N$ and $a,b\in \N_0$
\begin{align*}
H^\star_n(\{2\}^a,1,\{2\}^b,3)
=& -2\sum_{k=1}^n\frac{(-1)^{k}\binom{n}{k}}{k^{2(a+b)+4}\binom{n+k}{k}}
- 4\sum_{k=1}^n\frac{H_{k-1}(\ol{2b+3})\binom{n}{k}} {k^{2a+1}\binom{n+k}{k}} \\
-& 4 \sum_{k=1}^n\frac{H_{k-1}(1)(-1)^{k}\binom{n}{k}}{k^{2a+2b+3}\binom{n+k}{k}}
-  8 \sum_{k=1}^n\frac{H_{k-1}(\ol{2b+2},1)\binom{n}{k}} {k^{2a+1}\binom{n+k}{k}}
\end{align*}
in case {\rm ($\oll{\bt\hyf1\hyf\bt\hyf c}\hyf\bt$)}, and in case {\rm ($\bt\hyf c\hyf\oll{\bt\hyf1\hyf\bt\hyf c}\hyf\bt$)}:
\begin{align*}
H^\star_n& (\{2\}^{b_1},3,\{2\}^a,1,\{2\}^{b_2},3)
=-2\sum_{k=1}^n\frac{(-1)^{k}\binom{n}{k}}{k^{2(b_1+a+b_2)+7}\binom{n+k}{k}}
 -4\sum_{k=1}^n\frac{H_{k-1}(\ol{2b_2+3})\binom{n}{k}}{k^{2b_1+2a+4}\binom{n+k}{k}}\\
       -&4\sum_{k=1}^n\frac{H_{k-1}(2a+2b_2+5)(-1)^k\binom{n}{k}}{k^{2b_1+2}\binom{n+k}{k}}
      -8\sum_{k=1}^n\frac{H_{k-1}(\ol{2a+2},\ol{2b_2+3})(-1)^k\binom{n}{k}}{k^{2b_1+2}\binom{n+k}{k}}\\
        -&4\sum_{k=1}^n\frac{H_{k-1}(1)(-1)^k\binom{n}{k}}{k^{2b_1+2a+2b_2+6}\binom{n+k}{k}}
      -8\sum_{k=1}^n\frac{H_{k-1}(2a+2b_2+4,1)(-1)^k\binom{n}{k}}{k^{2b_1+2}\binom{n+k}{k}}\\
      -&8\sum_{k=1}^n\frac{H_{k-1}(\ol{2b_2+2},1)\binom{n}{k}}{k^{2b_1+2a+4}\binom{n+k}{k}}
       -16\sum_{k=1}^n\frac{H_{k-1}(\ol{2a+2},\ol{2b_2+2},1)(-1)^k\binom{n}{k}}{k^{2b_1+2}\binom{n+k}{k}}.
\end{align*}

By taking $n\to \infty$ in Theorem \ref{thm:MHS212c2&2c212c2} and using Lemma~\ref{lem:limitBound} we obtain
\begin{thm}\label{thm:MZSV212c2&2c212c2}
Let $r\in\N$ and $a_j,b_j,c_j-3\in\N_0$ for all $j\ge 1$. Then

$(\oll{\bt\hyf 1\hyf\bt\hyf c}\hyf\bt)$. For $\bfs=(\{2\}^{a_1},1,\{2\}^{b_1},c_1,\dots,\{2\}^{a_r},1,\{2\}^{b_r},c_r,\{2\}^t)$ with $a_1\ge 1$, we have
\begin{equation}\label{equ:MZSV212c2}
\zeta^{\star}(\bfs)=-\sum_{\bfp\in \Pi(2a_1+1,\, \ol{2b_1+2},\, \{1\}^{c_1-3},\,  \ol{2a_2+2},\,  \ldots,\,
                             \ol{2a_r+2},\,  \ol{2b_r+2},\, \{1\}^{c_r-3},\, 2t+1)} 2^{\ell(\bfp)} \zeta(\bfp).
\end{equation}

$(\bt\hyf c\hyf\oll{\bt\hyf 1\hyf\bt\hyf c}\hyf\bt)$. For $\bfs=(\{2\}^{b_1},c_1,\{2\}^{a_1},1,\dots,\{2\}^{b_r},c_r,\{2\}^{a_r},1,\{2\}^{b_{r+1}},c_{r+1},\{2\}^t)$ we have
\begin{equation}\label{equ:MZSV2c212c2}
\zeta^{\star}(\bfs)=-\sum_{\bfp\in \Pi( \ol{2b_1+2},\{1\}^{c_1-3},\, \ol{2a_1+2},\,  \ldots,\,\ol{2a_r+2},\,
                            \ol{2b_{r+1}+2},\, \{1\}^{c_{r+1}-3},\,  2t+1)} 2^{\ell(\bfp)} \zeta(\bfp).
\end{equation}
\end{thm}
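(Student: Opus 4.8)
The plan is to obtain both identities as the $n\to\infty$ limits of the corresponding finite multiple harmonic sum identities in Theorem~\ref{thm:MHS212c2&2c212c2}, exactly as Theorems~\ref{thm:MZSV21} and \ref{thm:MZSV212} were deduced from Theorems~\ref{thm:MHS21} and \ref{thm:MHS212}. First I would fix the index string $\bfs$ in either case and record that its leading entry is a $2$ (case $\oll{\bt\hyf1\hyf\bt\hyf c}\hyf\bt$, since $a_1\ge 1$) or else equals $c_1\ge 3$ (case $\bt\hyf c\hyf\oll{\bt\hyf1\hyf\bt\hyf c}\hyf\bt$, when $b_1=0$); in both situations $(s_1,\sgn(s_1))\ne(1,1)$, so the star Euler sum $\zeta^\star(\bfs)=\lim_{n\to\infty}H^\star_n(\bfs)$ exists by \eqref{equ:EulerSumDefn} and the left-hand sides of the two finite identities converge to $\zeta^\star(\bfs)$.

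For the right-hand sides I would treat each summand $\calH_n(\bfp)$ separately, using that the relevant $\Pi$-set is finite so the limit may be taken termwise. Writing $\calH_n(\bfp)=\sum_{k=1}^n\frac{\sgn(p_1)^k}{k^{|p_1|}}\sqfr{n}{k}\,H_{k-1}(p_2,\dots,p_m)$ as in \eqref{equ:calHn}, I would compare it with $\zeta(\bfp)=\sum_{k=1}^\infty\frac{\sgn(p_1)^k}{k^{|p_1|}}H_{k-1}(p_2,\dots,p_m)$: the difference splits into a tail $\sum_{k>n}$, which vanishes as $n\to\infty$ because $\zeta(\bfp)$ converges, plus the mollifier error $\sum_{k=1}^n\frac{|H_{k-1}(p_2,\dots,p_m)|}{k^{|p_1|}}\bigl(1-\sqfr{n}{k}\bigr)$, which tends to $0$ by Lemma~\ref{lem:limitBound} with $e=|p_1|$. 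Hence $\calH_n(\bfp)\to\zeta(\bfp)$, and summing over the finitely many $\bfp$ yields the claimed formulas \eqref{equ:MZSV212c2} and \eqref{equ:MZSV2c212c2}.

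The one point that needs genuine checking, and the only place the hypotheses enter, is that every $\bfp$ appearing in the two $\Pi$-sets has $|p_1|>1$, so that both the convergence of $\zeta(\bfp)$ and the bound $e>1$ in Lemma~\ref{lem:limitBound} are available. The leading generator of the first $\Pi$-set is $2a_1+1\ge 3$ and that of the second is $\ol{2b_1+2}$ with absolute value $\ge 2$. Since $|a\oplus b|=|a|+|b|$ for the O-plus operation \eqref{eqn:Oplus}, any $\oplus$-merging at the front only increases the absolute value of the leading entry, so $|p_1|\ge 2$ in all cases. This is precisely the observation used to pass from Theorems~\ref{thm:MHS21} and \ref{thm:MHS212} to Theorems~\ref{thm:MZSV21} and \ref{thm:MZSV212}, and once it is in place the remainder is the routine limit interchange above, so I expect no further obstacle.
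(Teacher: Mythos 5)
Your proposal is correct and follows essentially the same route as the paper: the paper likewise obtains \eqref{equ:MZSV212c2} and \eqref{equ:MZSV2c212c2} by letting $n\to\infty$ in Theorem~\ref{thm:MHS212c2&2c212c2} and invoking Lemma~\ref{lem:limitBound}. Your explicit verification that $|p_1|\ge 2$ for every $\bfp$ in the two $\Pi$-sets (using $|a\oplus b|=|a|+|b|$) is exactly the observation the paper leaves implicit, so there is no gap.
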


For example, taking $r=1$ and $t=0$ in case {\rm ($\oll{\bt\hyf 1\hyf\bt\hyf c}\hyf\bt$)} we get
\begin{align*}
\zeta^\star(\{2\}^a,1,\{2\}^b,3)
=& -2\zeta(\ol{2a+2b+4})
- 4\zeta(1+2a,\ol{2b+3})\\
-& 4 \zeta(\ol{2a+2b+3},1)
-  8 \zeta(2a+1,\ol{2b+2},1).
\end{align*}
and in case {\rm ($\bt\hyf c\hyf\oll{\bt\hyf1\hyf\bt\hyf c}\hyf\bt$)} we get
\begin{align*}
& \zeta^\star(\{2\}^{b_1},3,\{2\}^a,1,\{2\}^{b_2},3) \\
=&-2\zeta(\ol{2(b_1+a+b_2)+7})
 -4\zeta(2b_1+2a+4,\ol{2b_2+3}) \\
 &-4\zeta(\ol{2b_1+2},2a+2b_2+5)
-4\zeta(\ol{2b_1+2a+2b_2+6},1) \\
 &-8\zeta(\ol{2b_1+2},\ol{2a+2},\ol{2b_2+3}) -8\zeta(\ol{2b_1+2}, 2a+2b_2+4,1)  \\
 &-8\zeta(2b_1+2a+4,\ol{2b_2+2},1)  -16\zeta(\ol{2b_1+2},\ol{2a+2},\ol{2b_2+2},1).
\end{align*}
We have numerically verified these formulas with $a,b_1,b_2\le 2$ using EZ-face \cite{EZface}.

\section{Conjectures of Imatomi et al. on MZSV of type \\
$\oll{\bt\hyf 3\hyf \bt\hyf1}$ and $\oll{\bt\hyf 3\hyf\bt\hyf 1}\hyf\bt\hyf1$}\label{sec:ITTWConj}
Throughout this section the notations of \S\ref{sec:MHSMZSV2c21&212c21} and \S\ref{sec:MHSMZSV2c212&212c212}
are still in force. The following Theorem~\ref{thm:ITTWConj} was first conjectured by Imatomi et al. \cite[Conjectures 4.1 and 4.3]{ITTW2009}. Special cases have been proved in \cite[Theorem 1.1]{ITTW2009} and by Tasaka and Yamamoto in \cite{TasakaYa2012}. Yamamoto proves a more precise version in \cite{Yamamoto2012a}. We now give a different and concise proof using the identities we have found in the last two sections. We begin with a lemma first.

\begin{lem}\label{lem:evenArg}
Let $n_1,\dots,n_\ell\in \Z^*$ be $\ell$ even integers and $m=|n_1|+\cdots+|n_\ell|$. Then
\begin{multline*}
\sum_{g\in\frakS_{\ell}}\zeta \big(n_{g(1)},\dots,n_{g(\ell)}\big)= \\
\sum_{ e_1+ \cdots + e_p =\ell} (-1)^{\ell-p} \prod_{s=1}^p (e_s-1)!
\sum  \zeta\left(\bigoplus_{k \in \pi_1} n_k\right) \dots \zeta\left(\bigoplus_{k \in \pi_p} n_k\right)\in \Q\pi^{m}
\end{multline*}
where the sum in the right is taken over all the possible unordered partitions
of the set $\{1, \dots, \ell\}$ into $p$ subsets $\pi_1, \dots, \pi_p$ with $e_1, \dots, e_p$ elements respectively.
\end{lem}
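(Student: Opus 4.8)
The statement concerns the symmetric sum of multiple zeta values with all even arguments, so the natural tool is the theory of \emph{shuffle and stuffle (harmonic/quasi-shuffle) products}, together with the classical fact that such symmetric sums are rational multiples of $\pi^m$. The plan is to reduce the left-hand side to a product of single zeta values via the stuffle relations, then identify the resulting combinatorial coefficients with the partition expansion written on the right.

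\textbf{Step 1: Symmetrization via the stuffle product.} First I would recall that the quasi-shuffle (stuffle) product expresses a product of single Euler sums $\zeta(n_1)\cdots\zeta(n_\ell)$ as a sum over all ways of merging the one-letter words, where merging two entries $n_i,n_j$ produces the $\oplus$-combined argument $n_i\oplus n_j$ (this is exactly the operation \eqref{eqn:Oplus}, tracking signs correctly). Iterating, $\prod_{s}\zeta\big(\bigoplus_{k\in\pi_s} n_k\big)$ expands as a sum of MZVs indexed by orderings refining the partition. The key identity to set up is the inversion between the ``symmetric sum over $\frakS_\ell$ of MZVs'' and the ``sum over set partitions of products of zeta values.'' This is a standard Newton's-identity / Möbius-type inversion on the partition lattice, and the coefficients $(-1)^{\ell-p}\prod_s (e_s-1)!$ are precisely the values of the Möbius function (equivalently, the coefficients arising when passing between power sums and the exponential generating function of cycle types).

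\textbf{Step 2: Identifying the coefficients.} I would make Step 1 precise by working in the Hopf algebra of quasi-shuffles: the symmetrization map $\sum_{g\in\frakS_\ell}\zeta(n_{g(1)},\dots,n_{g(\ell)})$ is the image under the $\frakS_\ell$-action of a single word, and the stuffle antipode / Hoffman's duality lets one invert it in terms of products over blocks. The factor $(e_s-1)!$ counts the cyclic orderings within a block of size $e_s$ (the number of ways a block contributes through merging), while the sign $(-1)^{\ell-p}$ comes from the alternation in the inclusion–exclusion over the partition lattice $\Pi_\ell$. Concretely I would verify the coefficient identity by checking it agrees with the known combinatorial formula relating $\sum_{g\in\frakS_\ell}\zeta(n_{g(1)},\dots,n_{g(\ell)})$ to elementary products, for instance by a generating-function argument: both sides are the logarithmic/exponential transforms of the same family, so equality of coefficients follows by comparing the expansion of $\exp$ and $\log$ of the generating series $\sum_\ell \tfrac{1}{\ell!}\big(\sum_g \zeta(n_{g(1)},\dots)\big)$.

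\textbf{Step 3: Rationality in $\pi^m$.} Finally, since each $n_i$ is even and their $\oplus$-combinations remain even, every factor $\zeta\big(\bigoplus_{k\in\pi_s} n_k\big)$ is an even (possibly alternating) zeta value and hence a rational multiple of an even power of $\pi$ by Euler's evaluation (and its alternating analogue, where $\zeta(\overline{2j})$ is also a rational multiple of $\pi^{2j}$). The total weight of each product is $m=\sum|n_i|$, so every term lies in $\Q\pi^{m}$, giving the claimed membership. The \textbf{main obstacle} I anticipate is Step 2: getting the Möbius coefficients $(-1)^{\ell-p}\prod_s(e_s-1)!$ exactly right, including the correct bookkeeping of signs under $\oplus$ when some $n_i$ are negative, since the quasi-shuffle merging interacts nontrivially with the $\sgn$ factors in the definition \eqref{eqn:Sdefn}. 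I would handle this by first treating the all-positive case (pure MZVs) to fix the partition-lattice combinatorics cleanly, then reinstating the signs through the $\oplus$ operation, checking that the rationality conclusion is unaffected.
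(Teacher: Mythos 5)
Your proposal is correct and follows essentially the same route as the paper: the paper simply observes that the identity is Hoffman's symmetric-sum theorem \cite[Theorem 2.2]{Hoffman1992} when all $n_i>0$ and that its partition-lattice/stuffle argument carries over verbatim to the alternating case (with $+$ replaced by $\oplus$), which is exactly the M\"obius-inversion argument you sketch, and your Step 3 (evenness is preserved under $\oplus$, and $\zeta(\ol{2j})=(2^{1-2j}-1)\zeta(2j)\in\Q\pi^{2j}$) supplies the same concluding observation. The only difference is that you re-derive the coefficients $(-1)^{\ell-p}\prod_s(e_s-1)!$ via the exponential/logarithm correspondence rather than citing Hoffman's combinatorial lemma, which is a matter of presentation rather than substance.
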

\begin{proof}
When all the arguments $n_1,\dots,n_\ell$ are positive the lemma becomes \cite[Theorem 2.2]{Hoffman1992}. Its proof there can be
used here almost word for word. Notice that \cite[Theorem 2.2]{Hoffman1992} is re-proved as \cite[Proposition~9.4]{KurokawaLaOc2009}
whose proof is different from that of \cite{Hoffman1992} but also works here. Thus we leave the details to the interested reader.
\end{proof}

\begin{thm}  \label{thm:ITTWConj}
Let $r$ be a positive integer, and
$e_1,\ldots,e_{2r+1}$ nonnegative integers.
\begin{enumerate}
\item[{\rm (i)}]
Put $m=e_1+\cdots+e_{2r}$. Then we have
\begin{equation*}
\sum_{\tau\in\frakS_{2r}}
\zeta^\star(\{2\}^{e_{\tau(1)}},3,\{2\}^{e_{\tau(2)}},1,\{2\}^{e_{\tau(3)}},
    \ldots,3,\{2\}^{e_{\tau(2r)}},1)
\in \Q\cdot\pi^{2m+4r}.
\end{equation*}

\item[{\rm (ii)}]
 Put $m=e_1+\cdots+e_{2r+1}$. Then we have
\begin{equation*}
\sum_{\tau\in\frakS_{2r+1}}
\zeta^\star(\{2\}^{e_{\tau(1)}},3,\{2\}^{e_{\tau(2)}},1,
    \ldots,3,\{2\}^{e_{\tau(2n)}},1,\{2\}^{e_{\tau(2r+1)}+1})
\in \Q\cdot\pi^{2m+4r+2}.
\end{equation*}
\end{enumerate}
\end{thm}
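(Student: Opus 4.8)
The plan is to reduce Theorem~\ref{thm:ITTWConj} to the MZSV evaluations of Theorem~\ref{thm:MZSV2c21&212c21} and Theorem~\ref{thm:MZSV212c2&2c212c2} (the $\oll{\bt\hyf c\hyf\bt\hyf1}$ and related formulas with all $c_j=3$), and then to finish using the symmetric-function identity of Lemma~\ref{lem:evenArg}. First I would treat part~(i). Each summand $\zeta^\star(\{2\}^{e_{\tau(1)}},3,\{2\}^{e_{\tau(2)}},1,\ldots,3,\{2\}^{e_{\tau(2r)}},1)$ is exactly of $\oll{\bt\hyf c\hyf\bt\hyf1}$ type with every $c_j=3$, so by \eqref{equ:MZSV2c21} with $c_j-3=0$ it expands as $\sum_{\bfp\in\Pi(\ol{2b_1+2},\ol{2a_1+2},\ldots,\ol{2b_r+2},\ol{2a_r+2})}2^{\ell(\bfp)}\zeta(\bfp)$, where here $b_j=e_{\tau(2j-1)}$ and $a_j=e_{\tau(2j)}$. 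The crucial observation is that after summing over all $\tau\in\frakS_{2r}$, the collection of negative even arguments $\ol{2e_{\tau(j)}+2}$ ranges over all permutations of the fixed multiset $\{\ol{2e_1+2},\ldots,\ol{2e_{2r}+2}\}$, and the $\oplus$-contractions in $\Pi$ correspond precisely to grouping these arguments into blocks as in Lemma~\ref{lem:evenArg}.

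The key step, then, is to reorganize the double sum over $\tau$ and over $\bfp\in\Pi(\cdots)$ into a single sum indexed first by a set partition of $\{1,\ldots,2r\}$ (recording which original arguments get $\oplus$-merged into each block of $\bfp$) and then by an ordering of the resulting merged blocks. For a fixed partition into blocks, summing the merged argument $\bigoplus_{k\in\pi_s}\ol{2e_k+2}$ over all orderings of the blocks and over all ways the permutation $\tau$ can realize that partition produces exactly the symmetric sum $\sum_{g}\zeta(n_{g(1)},\ldots,n_{g(p)})$ appearing on the left side of Lemma~\ref{lem:evenArg}, where each $n_s=\bigoplus_{k\in\pi_s}(\ol{2e_k+2})$ is again a negative even integer. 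By Lemma~\ref{lem:evenArg} every such symmetric sum lies in $\Q\pi^{|n_1|+\cdots+|n_p|}$, and since $\oplus$ of negative even integers preserves total absolute value, $|n_1|+\cdots+|n_p|=\sum_{k=1}^{2r}(2e_k+2)=2m+4r$. After accounting for the powers of $2$ from $2^{\ell(\bfp)}$ and the multiplicities with which each partition arises, the entire sum collapses into a $\Q$-linear combination of $\pi^{2m+4r}$, which is the assertion of~(i).

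For part~(ii) I would run the identical argument using the $\oll{\bt\hyf c\hyf\bt\hyf1}\hyf\bt$ evaluation \eqref{equ:MZSV2c212} (again with all $c_j=3$): each summand is of that type with a trailing $\{2\}^{e_{\tau(2r+1)}+1}$, so its $\bfp$-expansion has arguments $\ol{2b_1+2},\ol{2a_1+2},\ldots,\ol{2a_r+2},\ol{2(e_{\tau(2r+1)}+1)}$, all negative even, with total absolute value $\sum_{k=1}^{2r}(2e_k+2)+2(e_{2r+1}+1)=2m+4r+2$. Summing over $\frakS_{2r+1}$ symmetrizes all $2r+1$ even arguments, Lemma~\ref{lem:evenArg} applies verbatim, and the result lands in $\Q\pi^{2m+4r+2}$. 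I expect the main obstacle to be purely bookkeeping: verifying that the reindexing from ``(permutation $\tau$, contraction pattern $\bfp$)'' to ``(set partition, block ordering)'' is a bijection that respects the weights $2^{\ell(\bfp)}$, so that the left-hand sides of the two displays in Lemma~\ref{lem:evenArg} are reproduced with the correct rational coefficients. One must check carefully that an $\oplus$-merge of adjacent entries in $\Pi$ matches the block-merge in Lemma~\ref{lem:evenArg} and that no spurious sign issues arise, since all merged arguments stay negative even (so $\sgn$ is constant and $\oplus$ reduces to ordinary addition of absolute values up to sign). Once this combinatorial identification is pinned down, both parts follow immediately.
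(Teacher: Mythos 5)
Your proposal follows essentially the same route as the paper: expand each summand via Theorem~\ref{thm:MZSV2c21&212c21} (resp.\ Theorem~\ref{thm:MZSV2c212&212c212}) with all $c_j=3$, reindex the double sum over permutations $\tau$ and $\oplus$-contractions $\bfp$ by unordered set partitions with ordered blocks, and conclude with Lemma~\ref{lem:evenArg}. The only slip is your claim that each merged block $\bigoplus_{k\in\pi_s}\ol{2e_k+2}$ is again negative --- its sign is actually $(-1)^{|\pi_s|}$ --- but this is harmless because Lemma~\ref{lem:evenArg} only requires nonzero even arguments and $|a\oplus b|=|a|+|b|$ holds regardless of signs.
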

\begin{proof} We start with (i) first.
When $r=1$ this follows quickly from \eqref{equ:MZSV2321r=1} by stuffle relation. For general $r$
let $a_j=e_{2j}$ and $b_j=e_{2j-1}$ for all $j\le r$ and let $A_j=2e_{j}+2$ for all $j\le 2r$ .
Then we can apply \eqref{equ:MZSV2c21} of Theorem \ref{thm:MZSV2c21&212c21} to the string $\bfs=(\{2\}^{b_1},3,\{2\}^{a_1},1,\dots,\{2\}^{b_r},3,\{2\}^{a_r},1)$ and get
\begin{equation*}
\zeta^\star(\bfs) =
\sum_{\tau\in\frakS_{2r}} \tau\left\{\sum_{\bfp\in\Pi(\ol{A_1},\ldots,\ol{A_{2r}}) }  2^{\ell(\bfp)}\zeta(\bfp) \right\}.
\end{equation*}
Let $\bfA=(\ol{A_1}, \ldots,\ol{A_{2r}})$ and $P_\ell(2r)$ be
the set of all partitions of $[2r]:=\{1,2,\dots,2r\}$ into $\ell$ consecutive subsets.
If $\gl=(\gl_1,\dots,\gl_\ell)\in P_\ell(2r)$ then we set
$\gl_j(\bfA)=(\ol{A_i})_{i\in \gl_j}$ so that the concatenation $\bigsqcup_{j=1}^\ell  \gl_j(\bfA) =\bfA$.
Because of the permutation we see that
\begin{align*}
\zeta^\star(\bfs) =&
 \sum_{\tau\in\frakS_{2r}} \tau \left\{\sum_{\ell=1}^{2r} 2^{\ell}
\sum_{\gl\in P_\ell(2r)}  \zeta\big(\oplus\gl_1(\bfA),\dots,\oplus\gl_\ell(\bfA)\big) \right\}\\
=& \sum_{\tau\in\frakS_{2r}} \tau \left\{\sum_{\ell=1}^{2r} \frac{2^{\ell}}{\ell!}
\sum_{\gl\in P_\ell(2r)} \sum_{g\in \frakS_\ell}
\zeta\big(\oplus\gl_{g(1)}(\bfA),\dots,\oplus\gl_{g(\ell)}(\bfA) \big) \right\},
\end{align*}
where $\oplus\bft$ is the $\oplus$-sum of all the components of $\bft$ for any composition $\bft$.
Hence Theorem~\ref{thm:ITTWConj}(i) follows
readily from the Lemma~\ref{lem:evenArg} since all $A_j$'s are even numbers.

Theorem~\ref{thm:ITTWConj} (ii) follows from Theorem \ref{thm:MZSV2c212&212c212}
in a similar fashion so we leave the details to the interested reader.
\end{proof}

\begin{rem}
We notice that in \cite[Theorem 1.1]{Yamamoto2012a} Yamamoto obtains a more precise formula by using partial sums and generating functions:
\begin{multline}\label{equ:YamamotoThm}
 \sum_{\substack{e_0,e_1,\ldots,e_{2r}\ge 0\\ e_0+e_1+\cdots+e_{2r}=m}}
\zeta^\star(\{2\}^{e_0},3,\{2\}^{e_1},1,\{2\}^{e_2},3,\ldots,
3,\{2\}^{e_{2r-1}},1,\{2\}^{e_{2r}})\\
=\sum_{\substack{2i+k+u=2r\\ j+l+v=m}}
(-1)^{j+k}{k+l\choose k}{u+v\choose u}
{2i+j\choose j}\frac{\beta_{k+l}\beta_{u+v}\pi^{4r+2m}}{(2i+1)(4i+2j+1)!},
\end{multline}
where $\beta_n=(-1)^{n}(2-2^{2n})B_{2n}/(2n)!.$
It is possible to modify our proof of Theorem~\ref{thm:ITTWConj} to give this more quantified version.
\end{rem}

\section{More Conjetures of Imatomi et al.}\label{sec:ITTWConj2}
The following results were first conjectured by Imatomi et al. \cite[Conjecture~4.5]{ITTW2009}.
\begin{thm}\label{thm:ITTWConj2}
Let $m$ and $n$ be two nonnegative integers.
\begin{enumerate}
\item[\upshape (i)]
 We have
\begin{equation*}
 \zeta^\star(\{2\}^n,3,\{2\}^m,1)+\zeta^\star(\{2\}^m,3,\{2\}^n,1)
    =\zeta^\star(\{2\}^{n+1}) \zeta^\star(\{2\}^{m+1}).
\end{equation*}

\item[\upshape (ii)] We have
\begin{equation*}
(2n+1) \zeta^\star(\{3,1\}^n,2)=\sum_{j+k=n} \zeta^\star(\{3,1\}^j)\zeta^\star(\{2\}^{2k+1}).
\end{equation*}

\item[\upshape (iii)] If $n\geq 1$ then we have
\begin{multline*}
\sum_{\begin{subarray}{c} e_1+e_2+\cdots +e_{2n}=1 \\ e_1,e_2,\ldots,e_{2n} \geq 0 \end{subarray}} \zeta^\star(\{2\}^{e_1}, 3, \{2\}^{e_2},1,\ldots, \{2\}^{e_{2n-1}},3, \{2\}^{e_{2n}},1)\\
=\sum_{j+k=n-1} \zeta^\star(\{3,1\}^j,2) \zeta^\star(\{2\}^{2k+2}).
\end{multline*}
\end{enumerate}
\end{thm}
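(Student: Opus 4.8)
The plan is to derive all three identities by feeding the explicit MZSV evaluations established in \S\ref{sec:MHSMZSV2c21&212c21} and \S\ref{sec:MHSMZSV2c212&212c212} into the stuffle (harmonic) product for multiple zeta star values, together with the known closed forms for the $\oll{\bt}$-type star values. For part (i), the key observation is that both summands on the left are of $\oll{\bt\hyf c\hyf\bt\hyf1}$ type with $r=1$ and $c_1=3$, so each is evaluated by \eqref{equ:MZSV2321r=1}:
\begin{equation}\label{eqn:part-i-inputs}
\zeta^\star(\{2\}^n,3,\{2\}^m,1)=2\zeta(2m+2n+4)+4\zeta(\ol{2n+2},\ol{2m+2}),
\end{equation}
and the analogous expression with $m,n$ swapped. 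Adding the two and using the stuffle relation $\zeta(\ol{2n+2},\ol{2m+2})+\zeta(\ol{2m+2},\ol{2n+2})=\zeta(\ol{2n+2})\,\zeta(\ol{2m+2})-\zeta(2m+2n+4)$, the single depth-one term cancels against the factor $2\zeta(2m+2n+4)$ and I am left with $4\,\zeta(\ol{2n+2})\,\zeta(\ol{2m+2})$. First I would then invoke Zlobin's identity \eqref{equ:Zlobin}, which gives $\zeta^\star(\{2\}^{n+1})=-2\zeta(\ol{2n+2})$ and likewise for $m$, so $4\,\zeta(\ol{2n+2})\,\zeta(\ol{2m+2})=\zeta^\star(\{2\}^{n+1})\,\zeta^\star(\{2\}^{m+1})$, which is exactly the right-hand side.

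For parts (ii) and (iii) the strategy is the same in spirit but requires summing the explicit formulas over the permutation-type sums appearing on the left, so the natural tool is Theorem~\ref{thm:ITTWConj} together with its proof mechanism. For (ii), I would recognize $\zeta^\star(\{3,1\}^n,2)$ as the $t$-trailed endpoint case, i.e.\ the $\oll{\bt\hyf c\hyf\bt\hyf1}\hyf\bt$ formula \eqref{equ:MZSV2c212} of Theorem~\ref{thm:MZSV2c212&212c212} with all $a_j=b_j=0$, $c_j=3$, and $t=1$. This expands $\zeta^\star(\{3,1\}^n,2)$ as a signed sum of ordinary MZVs $\zeta(\bfp)$ with $\bfp\in\Pi(\{\ol2\}^{2n},\ol2)$, all of whose arguments are even. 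On the right-hand side, $\zeta^\star(\{3,1\}^j)$ is the $r=j$, $t=0$ specialization of \eqref{equ:MZSV2c21} and $\zeta^\star(\{2\}^{2k+1})$ is evaluated by \eqref{equ:Zlobin}. The plan is to expand both sides into the $\oplus$-partition sums of even-argument MZVs (as in the proof of Theorem~\ref{thm:ITTWConj}), at which point the factor $(2n+1)$ on the left should match the combinatorial count of how a given coarse partition of $\{\ol2\}^{2n+1}$ arises, completing the identification. Part (iii) is handled identically: the left side is the $\oplus$-expansion of a permutation sum in the $\oll{\bt\hyf 3\hyf\bt\hyf1}$ family evaluated via \eqref{equ:MZSV2c21}, and the right side factors through \eqref{equ:MZSV2c212} and \eqref{equ:Zlobin}, so both collapse to the same $\Q$-linear combination of products of single even zeta values governed by Lemma~\ref{lem:evenArg}.

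The hard part will be the bookkeeping in (ii) and (iii): matching the two sides requires tracking, for each coarse $\oplus$-partition of the even-argument string, the exact rational coefficient produced on the left by the star formula and the permutation/stuffle structure against the coefficient produced on the right by the product of two separately-expanded star values. In particular, verifying that the prefactor $(2n+1)$ in (ii) and the single-box constraint $\sum e_i=1$ in (iii) reproduce precisely the correct multiplicities is where I expect the real work to lie; the symmetric-function identity of Lemma~\ref{lem:evenArg} reduces everything to products of even single zeta values, but confirming coefficient-by-coefficient agreement is the delicate combinatorial step. Everything else — the star-value inputs and Zlobin's formula — is already available from the earlier sections.
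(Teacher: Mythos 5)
Your treatment of part (i) is correct and is exactly the paper's route: both star values on the left are evaluated by \eqref{equ:MZSV2321r=1}, the two double zetas are combined by the stuffle relation $\zeta(\ol{2n+2},\ol{2m+2})+\zeta(\ol{2m+2},\ol{2n+2})=\zeta(\ol{2n+2})\zeta(\ol{2m+2})-\zeta(2m+2n+4)$, and Zlobin's identity \eqref{equ:Zlobin} converts $4\zeta(\ol{2n+2})\zeta(\ol{2m+2})$ into the product of star values. The paper states this in one line; your computation is the intended one.

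For (ii) and (iii) you have identified the correct inputs (the $\Pi(\{\ol{2}\}^{2j})$- and $\Pi(\{\ol{2}\}^{2n+1})$-expansions coming from Theorems~\ref{thm:MZSV2c21&212c21} and \ref{thm:MZSV2c212&212c212}, together with \eqref{equ:Zlobin}), but your proposed finish --- push both sides through Lemma~\ref{lem:evenArg} and match coefficients of products of single even zeta values --- is not the paper's argument and has a genuine gap. The products $\zeta(\ol{2k_1})\cdots\zeta(\ol{2k_p})$ of a fixed total weight are all rational multiples of the same power of $\pi$, so they are very far from linearly independent over $\Q$; ``coefficient-by-coefficient agreement'' in such an expansion is therefore not a meaningful test, and turning it into one would force you to verify nontrivial Bernoulli-number identities (essentially Yamamoto's generating-function computation, which is a different and harder proof). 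Lemma~\ref{lem:evenArg} also requires the fully symmetrized sum over $\frakS_\ell$, which you only obtain after regrouping the $\Pi$-sums by unordered partitions. The paper avoids all of this by proving the stronger formal identities \eqref{equ:ITTWmiddlestep} and \eqref{equ:ITTWmiddlestep2} coefficient-by-coefficient in the MZVs $\zeta(a_1,\dots,a_{t+1})$ themselves: for each such index it counts how many times it is produced on the right-hand side by stuffing the depth-one factor $2\zeta(\ol{4(n-j)+2})$ into a $\zeta(\bfp_{2j})$, with the sign rule \eqref{equ:signRule} dictating which merges and which insertions are possible, and the total count comes out to $2^{t-1}\sum_i|a_i|$, so that the weight $4n+2$ produces exactly the factor $2n+1$. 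In (iii) the left-hand side must first be recognized as the sum over indices in which exactly one entry of $\{\ol{2}\}^{2n}$ is promoted to $\ol{4}$, and the admissible positions of that entry inside an $\oplus$-block supply the multiplicities. This counting argument is the real content of the proof and is precisely the step your proposal defers as ``bookkeeping''; as written, the proposal does not contain a workable mechanism for it.
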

\begin{proof}
\underline{(i)}. This follows immediately from \eqref{equ:Zlobin} and \eqref{equ:MZSV2321r=1}.

\medskip
\underline{(ii)}. We notice that by taking $a_i=b_i=0$ and $c_i=3$ for all $i\le r=j$ in
Theorem~\ref{thm:MHS2c21&212c21}{\rm ($\oll{\bt\hyf c\hyf\bt\hyf1}$)} we get
\begin{equation*}
     \zeta^\star(\{3,1\}^j) =\sum_{\bfp_{2j}\in \Pi(\{\ol{2}\}^{2j})} 2^{\ell(\bfp_{2j})}\zeta(\bfp_{2j}).
\end{equation*}
All of the components $a_j$ of $\ol{2}\circ \cdots \circ \ol{2}$ must satisfy the following sign rule:
\begin{equation}\label{equ:signRule}
    a_j>0 \text{ if and only if }4|a_j.
\end{equation}
On the other hand, by Theorem~\ref{thm:MHS2c212&212c212}{\rm ($\oll{\bt\hyf c\hyf\bt\hyf1}\hyf\bt$)}
we have
\begin{equation*}
     \zeta^\star(\{3,1\}^n,2) =\sum_{\bfp_{2n+1}\in \Pi(\{\ol{2}\}^{2n+1})}  2^{\ell(\bfp_{2n+1})}\zeta(\bfp_{2n+1}).
\end{equation*}
Hence by \eqref{equ:Zlobin} we need to show that
\begin{equation}\label{equ:ITTWmiddlestep}
\sum_{\bfp_{2n+1}\in \Pi(\{\ol{2}\}^{2n+1})}  2^{\ell(\bfp_{2n+1})}\zeta(\bfp_{2n+1})
=\sum_{j=0}^n \sum_{\bfp_{2j}\in \Pi(\{\ol{2}\}^{2j})} 2^{\ell(\bfp_{2j})}\zeta(\bfp_{2j})\cdot 2\zeta(\ol{4(n-j)+2}).
\end{equation}
Suppose an index in $\bfp_{2j}$ has length $t+1$ ($0\le t\le 2n$) given as
$$(a_1,\dots,a_{t+1}), \quad a_i\in \Z^* \ \forall i=1,\dots,t+1.$$
We now show that there are exactly $2^t(2n+1)$ copies of such term produced
by stuffle product on the right hand side
of \eqref{equ:ITTWmiddlestep}. Indeed, for each $i=1,\dots,t+1$ the entry $a_i$ has two possibilities:

(1). $a_i=4b_i>0$. Then for each $k=1,\dots,b_i$ we may produce such a term on the right hand side
of \eqref{equ:ITTWmiddlestep} by stuffing
$$ 2^t \zeta(a_1,\dots,a_{i-1}, \ol{4k-2},a_{i+1}, \dots,a_{t+1})$$
from $\bfp_{2j}$ having length $t+1$ with the term $2\zeta(\ol{4(b_i-k)+2})$ at the right end of
\eqref{equ:ITTWmiddlestep}. Notice no shuffle is possible since $4(n-j)+2$ is not a multiple of $4$.
Hence these contribute to $2^{t+1} b_i=2^{t-1} a_i$ copies of $\zeta(a_1,\dots,a_{t+1})$.

(2). $a_i=\ol{4b_i+2}$. Then for each $k=1,\dots,b_i$ we may produce such a term on the right hand side
of \eqref{equ:ITTWmiddlestep} by stuffing
$$ 2^t \zeta(a_1,\dots,a_{i-1}, \ol{4k},a_{i+1}, \dots,a_{t+1})$$
from $\bfp_{2j}$ having length $t+1$ with the term $2\zeta(\ol{4(b_i-k)+2})$ at the right end of
\eqref{equ:ITTWmiddlestep}. Further, there is exactly one possible shuffle given by
 $$ 2^{t-1} \zeta(a_1,\dots,a_{i-1}, a_{i+1}, \dots,a_{t+1}) \sha \big\{2\zeta(\ol{4b_i+2})\big\},$$
since the index $(a_1,\dots,a_{i-1}, a_{i+1}, \dots,a_{t+1})$ has only length $t$.
Altogether these produce $2^{t+1} b_i+2^t=2^{t-1} |a_i|$ copies of $\zeta(a_1,\dots,a_{t+1})$.

By combining (1) and (2) we see that the right hand side of \eqref{equ:ITTWmiddlestep} produces
exactly
$$\sum_{i=1}^{t+1}2^{t-1} |a_i|=2^{t-1} \cdot|(a_1,\dots,a_{t+1})|=2^t(2n+1)$$
copies of $\zeta(a_1,\dots,a_{t+1})$ since the weight is $4n+2$. This proves (ii).

\medskip
 \underline{(iii)}. We use the same analysis as above and see that we need to prove the following identity:
\begin{equation}\label{equ:ITTWmiddlestep2}
\sum_{\bfq_{2n}} 2^{\ell(\bfq_{2n})}\zeta(\bfq_{2n})
=\sum_{j=0}^{n-1} \sum_{\bfp_{2j+1}\in \Pi(\{\ol{2}\}^{2j+1})}  2^{\ell(\bfp_{2j+1})}\zeta(\bfp_{2j+1})\cdot 2\zeta(\ol{4(n-j)}),
\end{equation}
where $\bfq_{2n}$ runs through all indices of the form
$A_1\circ\cdots\circ A_{2n}$ with one of the $A_j$'s (say $A_{j_0}$) equal to $\ol{4}$ and all the other $A_j$'s equal to $\ol{2}$. For each choice of
$2^t \zeta(a_1,\dots,a_{t+1})$ with length $t+1$ from the left hand of \eqref{equ:ITTWmiddlestep2}, all but one of the argument components $a_1,\dots,a_{t+1}$ must satisfy the sign rule \eqref{equ:signRule}. The only exceptional component, say $a_i$, must involve a merge with the special entry $A_{j_0}=\ol{4}$.
Now there are two possibilities:

(1). $a_i=4b_i+2>0$. Then for each $k=0,\dots,b_i-1$ we may produce such a term on the right hand side
of \eqref{equ:ITTWmiddlestep2} by stuffing
$$ 2^t \zeta(a_1,\dots,a_{i-1}, \ol{4k+2},a_{i+1}, \dots,a_{t+1})$$
from $\bfp_{2j+1}$ having length $t+1$ with the term $2\zeta(\ol{4(b_i-k)})$ at the right end of
\eqref{equ:ITTWmiddlestep2}. Notice no shuffle is possible since $4(n-j)$ is a multiple of $4$.
Hence these contribute to $2^{t+1} b_i$ copies of $\zeta(a_1,\dots,a_{t+1})$.
On the left hand side, such a term must be produced by setting all $2b_i-1$ consecutive $\circ$'s
around $A_{j_0}=\ol{4}$ to $\oplus$:
\begin{equation*}
     \dots, \underset{\text{$2b_i$ entries}}{\underbrace{A_{i}\oplus A_{i+1}\oplus
     \cdots \oplus A_{j_0}\oplus \cdots \oplus A_{\ell}}},\dots.
\end{equation*}
But $A_{j_0}$ can be at any one of the $2b_i$ possible positions, thus
producing $2^{t+1} b_i$ copies of $\zeta(a_1,\dots,a_{t+1})$ which match exactly the right hand side of \eqref{equ:ITTWmiddlestep2}.

(2). $a_i=\ol{4b_i}$. Then for each $k=1,\dots,b_i-1$ we may produce such a term on the right hand side
of \eqref{equ:ITTWmiddlestep2} by stuffing
$$ 2^t \zeta(a_1,\dots,a_{i-1},4k,a_{i+1}, \dots,a_{t+1})$$
from $\bfp_{2j}$ having length $t+1$ with the term $2\zeta(\ol{4(b_i-k)})$ at the right end of
\eqref{equ:ITTWmiddlestep2}. Further, there is exactly one possible shuffle given by
 $$ 2^{t-1} \zeta(a_1,\dots,a_{i-1}, a_{i+1}, \dots,a_{t+1}) \sha \big\{2\zeta(\ol{4b_i})\big\}.$$
Hence these contribute to $2^{t+1} (b_i-1)+2^t=2^t(2b_i-1)$ copies of $\zeta(a_1,\dots,a_{t+1})$.
Similar to (1), on the left hand side, such a term must be produced by setting all $2b_i-2$ consecutive $\circ$'s
around $A_{j_0}$ to $\oplus$. And $A_{j_0}$ can be at any one of the $2b_i-1$ possible positions, thus
producing $2^t(2b_i-1)$ copies of $\zeta(a_1,\dots,a_{t+1})$ which match exactly the right hand side of \eqref{equ:ITTWmiddlestep2}.

This concludes the proof of theorem.
\end{proof}

Note that Theorem~\ref{thm:ITTWConj2}(i) is the more precise version of
the $n=1$ case of Theorem~\ref{thm:ITTWConj}(i). And Theorem~\ref{thm:ITTWConj2}(iii)
can be written more compactly as
\begin{equation*}
 \zeta^\star \big(\{2\}\sha \{3,1\}^n \big)
=\sum_{k=0}^n \zeta^\star(\{3,1\}^{n-k},2) \zeta^\star(\{2\}^{2k}),
\end{equation*}
which is the more precise version of
the $m=1$ case of the following result of Kondo et al.\ \cite{KST2012}:
For all nonnegative integers $m$ and $n$ we have
\begin{equation*}
 \zeta^\star(\{2\}^m\sha\{3,1\}^n)\in \Q\pi^{2m+4n}.
\end{equation*}
The case $m=0$ case has the following precise formulation
by Muneta \cite{Muneta2008}:
\begin{equation*}
\zeta^\star\big(\{3,1\}^{n})
=\sum_{i=0}^{n} \bigg\{ \frac{2}{(4i+2)!}
  \sum_{\begin{subarray}{c} n_0 + n_1 = 2(n-i) \\ n_0, n_1 \geq 0 \end{subarray} }
  (-1)^{n_1} \frac{ (2^{2n_0}-2) B_{2n_0} } {(2n_0)!}
  \frac{ (2^{2n_1}-2) B_{2n_1} } {(2n_1)!} \bigg\} {\pi}^{4n}.
\end{equation*}
Muneta also found precise form in case $m=1$. Of course, these are all special cases
of Yamamoto's general formula \eqref{equ:YamamotoThm}.

\section{MHS: $\oll{\bfone\hyf c}\hyf\bfone$ formula}\label{sec:MHS1c&1c1}
In this section we turn to MHS of the type $\oll{\bfone\hyf c}\hyf\bfone$ where the trailing $\bfone$ may be vacuous and the $c$'s may be any positive integers (which is different from the requirement $c\ge 3$ in the previous sections). The corresponding MZSVs diverge when the leading $\bfone$ is non-empty, however, in a sequel to this paper we will study the congruence properties of MHS where the results of this section will be utilized.

The following theorem generalizes \cite[Theorem~2.2]{HessamiPilehrood2Ta2013}.
For $\bfs=(s_1,\dots,s_m)\in(\Z^*)^m$ we define
\begin{equation*}
  \calh_n(\bfs):= \sum_{n\ge k_1>\cdots >k_m\ge 1} \binom{n}{k_1}\,
 \prod_{j=1}^m  \frac{\sgn(s_j)^{k_j} }{k_j^{|s_j|}}
 =  \sum_{k=1}^n \frac{\sgn(s_1)^{k}}{k^{|s_1|}} \binom{n}{k}\, H_{k-1}(s_2,\dots,s_m).
\end{equation*}

\begin{thm} \label{thm:MHS1c1}
Let $r\in \N$ and $\bfs=(\{1\}^{a_1},c_1,\dots,\{1\}^{a_r},c_r,\{1\}^t)$
where $t,a_j\in\N_0$ and  $c_j\in\N$ for all $j\ge 1$. Then
\begin{equation}\label{equ:1c}
 H_n^{\star}(\bfs)=-\sum_{\bfp\in \Pi(\ol{a_1+1},\,  \{1\}^{c_1-2},\,  a_2+2,\,  \ldots,\,
                             a_r+2,\,  \{1\}^{c_r-2},\, t+1)}   \calh_n(\bfp).
\end{equation}
\end{thm}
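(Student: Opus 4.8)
The plan is to prove Theorem~\ref{thm:MHS1c1} by induction on $r+n$, following verbatim the strategy of Theorem~\ref{thm:MHS2c21&212c21}, but with two essential differences: the binomial weight $\binom{n}{k}$ of $\calh_n$ replaces the mollified weight $\sqfr{n}{k}$ of $\calH_n$, and the leading blocks are $\{1\}^{a}$-strings rather than $\{2\}^{a}$-strings. First I would dispose of the base case $n=1$ (where both sides reduce to a trivial identity) and then fix $n\ge 2$ with $r+n=N+1$, assuming the result for all smaller values of $r+n$. Writing $\bfs=(\{1\}^{a_1},c_1,\{1\}^{a_2},c_2,\dots,\{1\}^{a_r},c_r,\{1\}^t)$, I would peel off the leading $\{1\}^{a_1}$-block by the standard star-recursion
\begin{align*}
H^\star_n(\bfs)=&\sum_{l=0}^{a_1}\frac{1}{n^{a_1-l}}H^\star_{n-1}(\{1\}^l,c_1,\{1\}^{a_2},c_2,\dots,\{1\}^t)\\
&+\frac{1}{n^{a_1+1}}H^\star_n(\{1\}^{a_2},c_2,\dots,\{1\}^t),
\end{align*}
noting that because the leading string is $1$'s rather than $2$'s, the exponents drop by $1$ at each step (not $2$), so the interior geometric sum is $\sum_{l=0}^{a_1}(n/k)^{l}$ rather than the even-power version \eqref{equ:geomSum}.

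Second, I would apply the induction hypothesis to each of the two pieces, converting $H^\star_{n-1}$ and $H^\star_n$ into sums of $\calh$-values indexed by the appropriate $\Pi$-sets, and then interchange the order of summation. The geometric-sum step produces a factor of the form $\bigl(1-(k/n)^{a_1+1}\bigr)$ against $\binom{n-1}{k}$, which I would rewrite in terms of $\binom{n}{k}$; the ``main term'' reassembles into the desired $\calh_n(\bfp)$ with $\bfp\in\Pi(\ol{a_1+1},\{1\}^{c_1-2},a_2+2,\dots,t+1)$, exactly matching the right-hand side of \eqref{equ:1c}. The crux is then to show that the residual ``error terms'' cancel. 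Here is where I expect to invoke Lemma~\ref{lem:PTlemma2.2}: with the choice $c_n^{(1)}=1$ from Remark~\ref{rem:choiceAm} so that $A^{(1)}_{n,k}=\binom{n}{k}$, the partition of the relevant index set (analogous to \eqref{equ:myIndexSet}) into a leading merged term plus a sum over splittings $i+j+|\bfy|=c_1-1$ should be annihilated precisely by identity \eqref{lem:combinatorial} (and its sign-twisted companion \eqref{equ:PTlemma2.2c}), applied with $m=1$ and $c=c_1-1$. The decrement $c_1-2$ in the exponent of $\{1\}^{c_1-2}$, versus the $c_1-3$ appearing in the $\bt$-type theorems, reflects that the mollifier $\sqfr{n}{k}$ contributes one extra $k^{-1}$ relative to $\binom{n}{k}$, so the combinatorics of the $\{1\}$-interior shift accordingly.

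The main obstacle, and the step I would spend the most care on, is tracking the signs and the $\oplus$-merges through the recursion so that the inner index $\ol{a_1+1}$ emerges with the correct negation and the interior $\{1\}^{c_1-2}$ block is produced correctly from the $c_1$ in $\bfs$. Unlike the $\bt$-theorems there is no overall power of $2$ in front (the coefficient is $2^{\ell(\bfp)}$ there but plain in \eqref{equ:1c}), which I would trace to the fact that a leading $1$ contributes a single descent rather than the paired structure of a $2$; concretely the factor $2^{\ell}$ in Lemma~\ref{lem:PTlemma2.2} with $m=1$ becomes $1^{\ell(\bfx)}=1$, so no powers of $2$ accumulate. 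Once the alignment of signs and the vanishing of the error terms via Lemma~\ref{lem:PTlemma2.2} are verified in the two cases $p_1>0$ and $p_1<0$ (treating the first component of the induction output separately, as in the proof of Theorem~\ref{thm:MHS2c21&212c21}), the induction closes and \eqref{equ:1c} follows. Since the bookkeeping is entirely parallel to the earlier proof, I would present the reduction explicitly up through the application of Lemma~\ref{lem:PTlemma2.2} and then remark that the remaining verification proceeds exactly as before.
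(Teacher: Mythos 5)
Your overall strategy is exactly the paper's: induction on $n+r$, peeling off the leading block, the odd-power geometric sum $\sum_{l=0}^{a_1}(n/k)^{l}$, and an application of \eqref{lem:combinatorial} with $m=1$, $A^{(1)}_{n,k}=\binom{n}{k}$ and $c=c_1-1$ to absorb the residual terms. However, your opening recursion contains an error that would prevent the induction from closing. The term in which the entire leading block is pinned at the top index $n$ consumes all of $(\{1\}^{a_1},c_1)$, whose weight is $a_1+c_1$, so the correct decomposition is
\begin{align*}
H^\star_n(\bfs)=&\sum_{l=0}^{a_1}\frac{1}{n^{a_1-l}}H^\star_{n-1}(\{1\}^l,c_1,\{1\}^{a_2},c_2,\dots,\{1\}^t)\\
&+\frac{1}{n^{a_1+c_1}}H^\star_n(\{1\}^{a_2},c_2,\dots,\{1\}^t),
\end{align*}
not $\frac{1}{n^{a_1+1}}H^\star_n(\cdots)$ as you wrote; you appear to have pattern-matched from the $\oll{\bt\hyf1}$ case, where the trailing entry of the block is $1$ rather than $c_1$. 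This is not cosmetic: the whole point of the final step is that the gap of $c_1-1$ between the exponent $a_1+c_1$ of this last term and the exponent $a_1+1$ produced by the geometric sum is precisely the budget $j+|\bfx|=a+c$ with $c=c_1-1$ in Lemma~\ref{lem:PTlemma2.2}(i), which is what generates the interior string $\{1\}^{c_1-2}$ and the merged entry. With $\frac{1}{n^{a_1+1}}$ there is no deficit to distribute and the identity you would need at the end is false.

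A smaller point: your explanation of why the interior string is $\{1\}^{c_1-2}$ here versus $\{1\}^{c_1-3}$ in the $\bt$-type theorems (that the mollifier $\sqfr{n}{k}$ contributes an extra $k^{-1}$) is not the right mechanism. The difference comes from the geometric sum: over a $\{1\}$-block it yields $1-(k/n)^{a_1+1}$, so the leading $\Pi$-entry is $\ol{a_1+1}$ and the remaining deficit is $c_1-1$, whereas over a $\{2\}$-block it yields $1-(k/n)^{2b_1+2}$, leading entry $\ol{2b_1+2}$ and deficit $c_1-2$; the $\binom{n+k}{k}$ in the mollifier merely absorbs the extra factor $n+k$ coming from the even geometric sum and does not shift any $k$-exponent. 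Everything else in your outline --- the sign bookkeeping, the observation that $m=1$ is what eliminates the powers of $2$, and the reduction to Lemma~\ref{lem:PTlemma2.2} --- matches the paper's proof.
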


\begin{proof}
When $n=1$ it is clear that both sides in \eqref{equ:1c} are equal to 1.
We proceed by induction on $n+r$. By definition
\begin{align*}
    H^\star_n(\bfs)= &\sum_{l=0}^{a_1} \frac{1}{n^{a_1-l}}H^\star_{n-1}( \{1\}^l,c_1,\ldots,\{1\}^{a_r},c_r,\{1\}^t) \\
    +& \frac{1}{n^{a_1+c_1}}H^\star_n(\{1\}^{a_2},c_2,\ldots,\{1\}^{a_r},c_r,\{1\}^t).
\end{align*}
For ease of reading we define the following index sets: for any composition $\bfv$ of integers
\begin{align*}
    I(\bfv)= &\Pi(\bfv,\,  \{1\}^{c_1-2},\,  a_2+2,\,  \{1\}^{c_2-2},\,   \ldots,\, a_r+2,\,  \{1\}^{c_r-2},\, t+1),\\
    J =&\Pi(\ol{a_2+1},\,  \{1\}^{c_2-2},\,  a_3+2,\,  \ldots,\,
                             a_r+2,\,  \{1\}^{c_r-2},\, t+1).
\end{align*}
By induction assumption
\begin{multline*}
    H^\star_n(\bfs)=-\sum_{l=0}^{a_1} \frac{1}{n^{a_1-l}}
        \sum_{\bfq\in I(\ol{l+1})}   \calh_{n-1}(\bfq)
    -\frac{1}{n^{a_1+c_1}} \sum_{\bfp\in J}   \calh_n(\bfp) \\
=-\sum_{(q_1,\dots,q_m)\in I(\ol{1})}
 \sum_{l=0}^{a_1} \frac{1}{n^{a_1-l}}  \sum_{k=1}^{n-1} \frac{\sgn(q_1)^k}{k^{l+|q_1|} } \binom{n-1}{k}\,
 H_{k-1}(q_2,\dots,q_m)
 -\frac{1}{n^{a_1+c_1}} \sum_{\bfp\in J}   \calh_n(\bfp).
\end{multline*}
By changing the order of summations and using the identity
\begin{equation*}
\sum_{l=0}^{a_1} \left(\frac{n}{k}\right)^{l}=\frac{1}{k^{a_1}}\cdot \frac{n^{a_1+1}-k^{a_1+1}}{n-k}
\end{equation*}
we see easily that
\begin{align*}
  H^\star_n(\bfs)=&
 -\sum_{(q_1,\dots,q_m)\in I(\ol{1})}  \sum_{k=1}^n \Big(1-\frac{k^{a_1+1}}{n^{a_1+1}} \Big)
 \frac{\sgn(q_1)^k}{k^{a_1+|q_1|} } \binom{n}{k}\,  H_{k-1}(q_2,\dots,q_m) \\
&-\frac{1}{n^{a_1+c_1}} \sum_{\bfp\in J}   \calh_n(\bfp).
\end{align*}
Observe that the index set
\begin{equation}\label{equ:myIndexSetFinite}
 I(\ol{1}) =\bigcup_{(p_1,\dots,p_m)\in J } \Big\{\big( \ol{1} \circ \underbrace{1 \circ \dots \circ 1}_{c_1-2 \text{ times}}
\circ (p_1+1),p_2,\dots,p_m \big)  \Big\}.
\end{equation}
For each $(\ol{p_1},p_2\dots,p_m)$ we can partition the set \eqref{equ:myIndexSetFinite} into the following subsets:
\begin{alignat*}{4}
 \Big\{ \big(\ol{c_1+p_1},\bfv \big)\Big\} \cup
 \Big\{\big(\ol{j+1},\bfy,i+p_1,\bfv\big)\Big\}, \quad
 \bfv=(p_2,\dots,p_m),
\end{alignat*}
for $i\ge 1$, $j\ge 0$ and positive compositions $\bfy$ with $i+j+|\bfy|=c_1-1$.
Thus it suffices to prove that
\begin{equation}\label{equ:1cusePTLemma2.2a}
 \sum_{\substack{\ i+j+|\bfy|=c_1-1, \\ i\ge 1,j\ge 0}}
\sum_{k=1}^n\frac{H_{k-1}(\bfy,i+p_1) \binom{n}{k}} {(-1)^k k^{j} }
=\frac{1}{n^{c_1-1}} \sum_{k=1}^n\frac{(-1)^{k}\binom{n}{k}} {k^{p_1} }
-\sum_{k=1}^n\frac{  (-1)^{k} \binom{n}{k}} {k^{c_1-1+p_1} }.
\end{equation}
Equation \eqref{equ:1cusePTLemma2.2a} follows from \eqref{lem:combinatorial}
of Lemma~\ref{lem:PTlemma2.2} when $m=1$, $A^{(1)}_{n,k}$ as in Remark~\ref{rem:choiceAm},
$c=c_1-1,a=p_1$, $\bfx=(\bfy,i+p_1)$ and $\bfv=\emptyset$.
This completes the proof of our theorem.
\end{proof}

For example, when $r=1$ we recover \cite[Theorem~2.2]{HessamiPilehrood2Ta2013} and when $r=2$ we get
for all $a_1,a_2,t\in \N_0$ and $c_1,c_2\in \N$
{\allowdisplaybreaks
\begin{multline*}%\label{equ:1cr=2}
   H^\star_n(\{1\}^{a_1},c_1,\{1\}^{a_2},c_2,\{1\}^{t})
 =- \sum_{k=1}^n  \frac{(-1)^k\binom{n}{k} }{k^{a_1+c_1+a_2+c_2+t}}  \\
  -\sum_{\substack{\ i_2+j_2+|\bfx_2|=c_2, \\ i_2,j_2\geq 1}}
  \sum_{k=1}^n \frac{H_{k-1}(\bfx_2,i_2+t) \binom{n}{k}} {(-1)^k k^{a_1+c_1+a_2+j_2}}
-\sum_{\substack{\ i_1+j_1+|\bfx_1|=c_1, \\ i_1,j_1\geq 1}}
  \sum_{k=1}^n \frac{H_{k-1}(\bfx_1, i_1+a_2+c_2+t) \binom{n}{k}} {(-1)^k k^{a_1+j_1}} \\
  -\sum_{\substack{\ i_\ga+j_\ga+|\bfx_\ga|=c_\ga, \\ i_\ga,j_\ga\geq 1,\ \ga=1,2}}
  \sum_{k=1}^n \frac{H_{k-1}(\bfx_1,i_1+a_2+j_2,\bfx_2,i_2+t) \binom{n}{k}} { (-1)^kk^{a_1+j_1}}.
\end{multline*}
}

\section{Concluding Remarks}
There are many recent studies on MZVs, MZSVs and even their $q$-analogs.
Most of the MZSV relations in \cite{Igarashi2011} and \cite{IKOO2011} involving special types of arguments like ours in this paper can be proved in a more straight-forward manner using our results. However, it seems that the techniques contained here are hard to generalize to deal with MZVs even though these two types of values are extremely closely related from the point of view of their algebraic structures (see \cite{HoffmanIh2012,IKOO2011,Muneta2009a,TanakaWa2010}). Such a generalization should help us resolve more conjectures such as those listed in \cite[\S7.2]{BBBL1998}.

There are three more directions of research that should be of great interest. One is a theory generalizing the MHS identities obtained in this paper to truly alternating ones. We are aware of only one such instance. Setting $x=0$ and $x=1$ in \cite[Lemma~5.4]{TaurasoZh2010} we get
\begin{equation*}
H^\star_n(\{1\}^a,\ol{1})=\sum_{k=1}^n \frac{(2^k-1)(-1)^k}{k^{a+1}}\binom{n}{k}\quad \forall a\in\N_0.
\end{equation*}
Another direction is to establish a corresponding theory for multiple $q$-zeta values \cite{Bradley2005,Zhao2007c}.
Initial computations show it is quite a promising project, see \cite{HessamiPilehrood2013,HessamiPilehrood2Zhao}.

As for the third direction we notice that the many MHS identities proved in this paper can be used not only to derive MZSV identities but also to prove many congruences of MHS. This idea has already been carried out in \cite{HessamiPilehrood2Ta2013} to prove one of our conjectures from \cite{Zhao2008a}. In general, these congruences should shed more light on the unsolved \cite[Conjecture 2.6]{Zhao2011c} and the conjectures at the end of \cite{Zhao2008a}. This will be done in a sequel to this paper.


\begin{thebibliography}{99}

\bibitem{AET2001}
S.\ Akiyama, S.\ Egami and Y.\ Tanigawa,
Analytic continuation of multiple zeta-functions and their values at non-positive integers,
\emph{Acta Arithmetica} \textbf{98} (2001), pp.\ 107--116.

\bibitem{BBBL1998}
J.M.\ Borwein, D.M.\ Bradley and D.J.\ Broadhurst, and P.\ Lisonek,
Combinatorial aspects of multiple zeta values,
\emph{Electron.\ J.\ Combin.} \textbf{5} (1998), R38.

\bibitem{EZface}
J.M.\ Borwein, P.\ Lisonek and P.\ Irvine, An interface for
evaluation of Euler sums, available online at
\url{http://oldweb.cecm.sfu.ca/cgi-bin/EZFace/zetaform.cgi}

\bibitem{Bradley2005}
D.\ M.\ Bradley,  Multiple $q$-zeta values,
\emph{J.\ of Algebra} \textbf{283} (2005), pp.\ 752--798.

\bibitem{Euler1775}
L.\ Euler, Meditationes circa singulare serierum genus,
\emph{Novi Comm.\ Acad.\ Sci.\ Petropol.}\ \textbf{20} (1775), pp.\ 140--186;
reprinted in Opera Omnia, Ser.\
I, vol. \textbf{15}, B.\ Teubner, Berlin, 1927, pp.\ 217--267.

\bibitem{HessamiPilehrood2013}
Kh.\ Hessami Pilehrood, T.\ Hessami Pilehrood,
On $q$-analogues of two-one formulas for multiple harmonic sums and multiple zeta star values,
arXiv:1304.0269

\bibitem{HessamiPilehrood2Ta2013}
Kh.\ Hessami Pilehrood, T.\ Hessami Pilehrood and R.\ Tauraso,
New properties of multiple harmonic sums modulo $p$ and $p$-analogues of Leshchiner's series,
to appear in \emph{Trans.\ Amer.\ Math.\ Soc.},
DOI: http://dx.doi.org/10.1090/S0002-9947-2013-05980-6,  arXiv:1206.0407

\bibitem{HessamiPilehrood2Zhao}
Kh.\ Hessami Pilehrood, T.\ Hessami Pilehrood and J.\ Zhao,
On $q$-analogues of some identity families for multiple harmonic sums and multiple zeta star values,
arXiv: 1307.7985

\bibitem{Hoffman1992}
M.E.\ Hoffman, Multiple harmonic series,
\emph{Pacific J.\ Math.} \textbf{152} (1992), pp.\ 275--290.

\bibitem{HoffmanIh2012}
M.\ Hoffman and K.\ Ihara, Quasi-shuffle products revisited, Max Planck Institute for Mathematics
preprint 2012 (16).

\bibitem{Igarashi2011}
M.\ Igarashi, Note on relations among multiple zeta-star values.
arXiv: 1106.0481

\bibitem{IKOO2011}
K.\ Ihara, J.\ Kajikawa, Y.\ Ohno and J.\ Okuda, Multiple zeta values vs.
multiple zeta-star values, \emph{J.\ Algebra} \textbf{332}(1) (2011), pp.\ 187--208.

\bibitem{ITTW2009}
K.\ Imatomi, T.\ Tanaka, K.\ Tasaka and N.\ Wakabayashi,
On some combinations of multiple zeta-star values. arXiv:0912.1951.

\bibitem{KST2012}
Y.\ Kondo, S.\ Saito and T.\ Tanaka,
The Bowman-Bradley theorem for multiple zeta-star values,
\emph{J.\ Number Theory}  \textbf{132}(9) (2012), pp.\ 1984--2002.

\bibitem{KurokawaLaOc2009}
N.\ Kurokawa, M.\ Lalin, and H.\ Ochiai,
Higher Mahler measures and zeta functions, \emph{Acta Arith.} \textbf{135}(3) (2008), pp.\ 269--297.

\bibitem{LinebargerZh2013}
E.\ Linebarger and J.\ Zhao,
A family of multiple harmonic sum and multiple zeta star value identities, to appear in \emph{Mathematika},
arXiv:1304.3927.

\bibitem{Muneta2008}
S.\ Muneta, On some explicit evaluations of multiple zeta-star values,
\emph{J.\ Number Theory} \textbf{128}(9) (2008), pp.\ 2538--2548.

\bibitem{Muneta2009a}
S.\ Muneta, Algebraic setup of non-strict multiple zeta values, 
\emph{Acta Arith.} \textbf{136}(1) (2009), pp.\ 7--18.

\bibitem{OhnoZu2008}
Y.\ Ohno and W.\ Zudilin, Zeta stars,
\emph{Commun.\ Number Theory Phys.} \textbf{2} (2008), pp.\ 325--347.

\bibitem{TanakaWa2010}
T.\ Tanaka and N.\ Wakabayashi, An algebraic proof of the cyclic sum
formula for multiple zeta values,
\emph{J.\ Algebra} \textbf{323}(3) (2010), pp.\ 766--778.

\bibitem{TasakaYa2012}
K.\ Tasaka and S.\ Yamamoto,
On some multiple zeta-star values of one-two-three indices.
arxiv: 1203.1115.

\bibitem{TaurasoZh2010}
R.~Tauraso, J.~Zhao, Congruences of alternating multiple harmonic sums,
\emph{J.\ Comb.\ and Number Theory} \textbf{2}(2) (2010), pp.\ 129--159.
arXiv:0909.0670

\bibitem{Yamamoto2012a}
S.\ Yamamoto, Explicit evaluation of certain sums of multiple zeta-star values,
arXiv:1203.1111.

\bibitem{Yamamoto2012b}
S.\ Yamamoto, Interpolation of multiple zeta and zeta-star values,
arXiv:1203.1118.

\bibitem{Zagier1994}
D.\ Zagier, Values of zeta functions and their applications,
in: First European Congress of Mathematics (Paris, 1992), vol.\ \textbf{II},
A.\ Joseph et.\ al. (eds.), Birkh\"auser, Basel, 1994, pp.\ 497--512.

\bibitem{Zagier2012}
D.\ Zagier, Evaluation of the multiple zeta values $\zeta(2,\dots , 2, 3, 2,\dots , 2)$,
\emph{Ann.\ of Math.}\ \textbf{175} (2012), pp.\ 977--1000.

\bibitem{Zhao1999}
J.\ Zhao, Analytic continuation of multiple zeta functions, \emph{Proc.\ Amer.\
Math.\ Soc.}\ \textbf{128}(1999), pp.\ 1275--1283.

\bibitem{Zhao2007c}
J.\ Zhao, Multiple $q$-zeta functions and multiple $q$-polylogarithms,
\emph{Ramanujan J.}\  \textbf{14}(2) (2007), pp.\ 189--221.

\bibitem{Zhao2008a}
J.~Zhao,  Wolstenholme Type Theorem for multiple harmonic sums,
\emph{Int.\ J.\ of Number Theory} \textbf{4}(1) (2008), pp.\ 73--106.
arXiv:math.NT/0301252

\bibitem{Zhao2010a}
J.\ Zhao, On a conjecture of Borwein, Bradley and Broadhurst,
\emph{J.\ Reine Angew.\ Math.} \textbf{639} (2010), pp.\ 223--233.

\bibitem{Zhao2011c}
J.~Zhao, Mod $p$ structure of alternating and non-alternating multiple harmonic sums,
\emph{J.~Th\'eor.\ Nombres Bordeaux} \textbf{23}(1) (2011), pp.\ 299--308.

\bibitem{Zlobin2005}
S.\ Zlobin, Generating functions for the values of a multiple zeta function,
\emph{Vestnik Moskov.\ Univ.\ Ser.\ I Mat.\ Mekh.} \textbf{2}
(2005), pp.\ 55--59. English translation:
\emph{Moscow Univ.\ Math.\ Bull.} \textbf{60}(2) (2005), pp.\ 44--48.


\end{thebibliography}
\end{document}